\let\OLDthebibliography\thebibliography
\renewcommand\thebibliography[1]{
	\OLDthebibliography{#1}
	\setlength{\parskip}{0pt}
	\setlength{\itemsep}{0pt plus 0.3ex}
}
\newcommand{\R}{\mathbb{R}}
\newcommand{\E}{\mathbb{E}}
\theoremstyle{definition}
\newtheorem{theorem}{Theorem}
\newtheorem{proposition}{Proposition}
\newtheorem{lemma}{Lemma}
\newtheorem*{method*}{Method}
\begin{document}
	\title{A Sampling Kaczmarz-Motzkin Algorithm for Linear Feasibility}
	\author{Jes\'us A. De Loera, Jamie Haddock, Deanna Needell}
	\date{}                                           
	
	\maketitle

\begin{abstract}
We combine two iterative algorithms for solving large-scale systems of linear inequalities, the relaxation method of  Agmon, Motzkin et al. and the randomized Kaczmarz method. We obtain a family of algorithms that generalize and extend both projection-based techniques. We prove several convergence results, and our computational  experiments show our algorithms often outperform the original methods.
\end{abstract}

\section{Introduction}

We are interested solving large-scale systems of linear inequalities $ Ax \leq b$. Here $b \in \R^m$ and $A$ an $m \times n$ matrix; the regime  $m\gg n$ is our setting of interest, where iterative methods are typically employed. We denote the rows of $A$ by the vectors $a_1,a_2,\dots,a_m$. It is an elementary fact that the set of all $x \in \R^n$ that satisfy the above constraints is a convex polyhedral region,  which we will denote by $P$.  This paper merges two iterated-projection methods, the relaxation method of  Agmon, Motzkin et al. and the randomized Kaczmarz method. For the most part, these two methods have not met each other and have not been analyzed in a unified framework.  The combination of these two algorithmic branches of thought results in an interesting new family of algorithms which generalizes and outperforms its predecessors. We begin with a short description of these two classical methods.


{\bfseries Motzkin's method.}  The first branch of research in linear feasibility is the so-called 
 \emph{relaxation method} or \emph{Motzkin's method}. It is clear from the literature that this is not
 well-known, say among researchers in machine learning, and some results have been re-discovered several times. E.g., the famous 1958 \emph{perceptron} algorithm \cite{rosenblatt} can be thought of a member of this family of methods;  but the very first relaxation-type algorithm analysis appeared a few years earlier in 1954, within the work of Agmon \cite{agmon}, and Motzkin and  Schoenberg \cite{motzkinschoenberg}. Additionally, the relaxation method has been referred to as the Kaczmarz method with the ``most violated constraint control'' or the ``maximal-residual control'' \cite{CensorRowAction, GreedyKaczmarz, Petra2016}. This method can be described as follows: Starting from any initial point $x_0$, a sequence of points is generated.
If the current point $x_i$ is feasible we stop, else there must be a constraint $a^Tx \leq b$ that is \emph{most violated}. 
The constraint defines a hyperplane $H$. 
If $w_H$ is the orthogonal projection of $x_i$ onto the hyperplane $H$, choose a 
number $\lambda$ (normally chosen between $0$ and $2$),
and the new point $x_{i+1}$ is given by $x_{i+1}=x_i+\lambda(w_H-x_i)$. Figure \ref{fig:relax2} displays the iteration visually.

\begin{figure}[h]
\begin{center}
\includegraphics[scale=0.6]{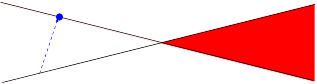} \hskip .3cm \includegraphics[scale=0.6]{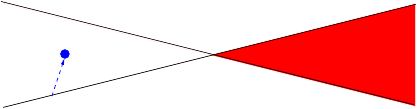} \hskip .3cm \includegraphics[scale=0.6]{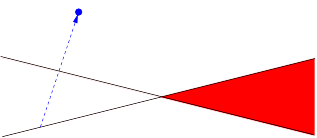} \\
\includegraphics[scale=.25]{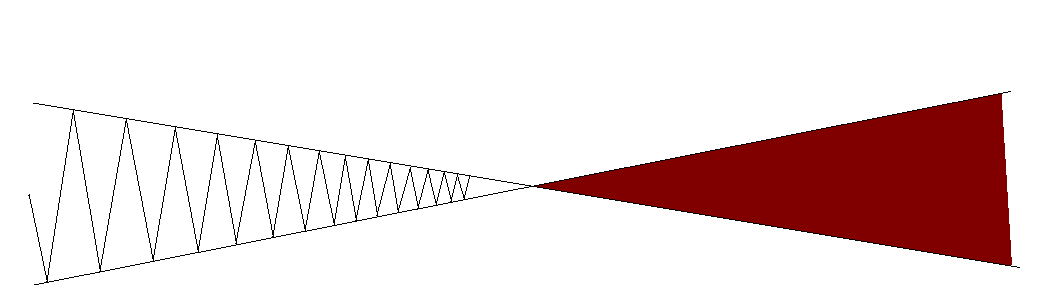}
\end{center}
\caption{three projections with $\lambda=1$, $\lambda <1$ and $\lambda >1$ and a visualization of several steps of the algorithm.}
\label{fig:relax2}

\end{figure}

Many modifications and analyses of this technique have been published since the 1950s, creating an extensive bibliography.  
For example, versions of the relaxation method have suggested various choices of step-length multiplier, $\lambda$ (throughout this paper we consider $\lambda \in (0,2]$),  and various choices for the violated hyperplane.  
The rate of convergence of Motzkin's method depends not only on $\lambda$, but also on the \textit{Hoffman constants} investigated first by
Agmon \cite{agmon} and then later by Hoffmann \cite{hoffman}. If the system of inequalities $Ax \le b$ is feasible, i.e. $P \not= \emptyset$, then there exists Hoffman constants $L_\infty$ and 
$L_2$ so that $d(x,P) \le L_\infty \|(Ax - b)^+\|_\infty$ and $d(x,P) \le L_2 \|(Ax-b)^+\|_2$ for all $x$ (here and throughout, $z^+$ denotes the positive entries of the vector $z$ with zeros elsewhere and $d(x,P)$ the usual distance between a point $x$ and the polytope $P$).  The constants satisfy $L_\infty \le \sqrt{m} L_2$.  When the system of inequalities $Ax \le b$ defines a consistent system of equations $\tilde{A}x = \tilde{b}$ with full column-rank matrix $\tilde{A}$, then the Hoffman constant is simply the norm of the left inverse, $\|\tilde{A}^{-1}\|_2$.
With these constants, one can prove convergence rate results like the following (a spin-off of  Theorem 3 of \cite{agmon} which is easily proven in the style of \cite{leventhallewis}):

\begin{proposition} \label{agmon}
Consider a normalized system with $\|a_i\| = 1$ for all $i=1,...,m$.  If the feasible region $P$ is nonempty then the relaxation method converges linearly: $$d(x_{k},P)^2 \le \bigg(1 - \frac{2\lambda - \lambda^2}{L_\infty^2}\bigg)^k d(x_0,P)^2 \le \bigg(1 - \frac{2\lambda -\lambda^2}{m L_2^2}\bigg)^k d(x_0,P)^2.$$
\end{proposition}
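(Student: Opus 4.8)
The plan is to prove the one-step contraction
$$d(x_{k+1},P)^2 \le \left(1 - \frac{2\lambda - \lambda^2}{L_\infty^2}\right) d(x_k,P)^2$$
and then iterate. First I would fix an iterate $x_k$ and let $t$ index the most violated constraint, so that $a_t^T x_k - b_t = \|(Ax_k-b)^+\|_\infty =: \beta_k$; if $\beta_k \le 0$ then $x_k \in P$ and there is nothing to prove, so assume $\beta_k > 0$. Because the system is normalized ($\|a_t\| = 1$), the orthogonal projection of $x_k$ onto the violated hyperplane $H$ is $w_H = x_k - \beta_k a_t$, and the update becomes simply $x_{k+1} = x_k + \lambda(w_H - x_k) = x_k - \lambda\beta_k a_t$.

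Next I would introduce $y_k$, the orthogonal projection of $x_k$ onto the polyhedron $P$, so that $d(x_k,P) = \|x_k - y_k\|$. Since $y_k \in P$ is feasible, it is a candidate point for the distance from $x_{k+1}$, giving $d(x_{k+1},P) \le \|x_{k+1}-y_k\|$. Expanding the square with the explicit update yields
$$\|x_{k+1}-y_k\|^2 = \|x_k - y_k\|^2 - 2\lambda\beta_k\, a_t^T(x_k - y_k) + \lambda^2\beta_k^2,$$
where I have again used $\|a_t\| = 1$. The crucial estimate is a lower bound on the cross term: because $y_k$ is feasible it satisfies $a_t^T y_k \le b_t$, and therefore
$$a_t^T(x_k - y_k) = (a_t^T x_k - b_t) + (b_t - a_t^T y_k) = \beta_k + (b_t - a_t^T y_k) \ge \beta_k.$$
Substituting this into the expansion collapses the cross and quadratic terms into $-(2\lambda - \lambda^2)\beta_k^2$, so that $d(x_{k+1},P)^2 \le d(x_k,P)^2 - (2\lambda - \lambda^2)\beta_k^2$.

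To turn this into a geometric rate I would invoke the Hoffman bound stated in the excerpt, namely $d(x_k,P) \le L_\infty \|(Ax_k-b)^+\|_\infty = L_\infty \beta_k$, which rearranges to $\beta_k^2 \ge d(x_k,P)^2/L_\infty^2$. Feeding this into the previous display produces exactly the claimed one-step contraction, and iterating over $k$ gives the first inequality. The second inequality then follows directly from $L_\infty \le \sqrt{m}\,L_2$, i.e. $L_\infty^2 \le m L_2^2$, which enlarges the contraction factor; for $\lambda \in (0,2]$ one has $0 \le 2\lambda - \lambda^2 \le 1 \le L_\infty^2$, so both factors lie in $[0,1]$ and raising to the $k$-th power preserves the inequality.

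I do not expect any genuine obstacle once the Hoffman inequality is taken as given. The only points requiring care are (i) recognizing that the most-violated residual $\beta_k$ is precisely $\|(Ax_k-b)^+\|_\infty$, which is what allows the Hoffman bound to enter, and (ii) the sign bookkeeping in the cross-term estimate, where it is feasibility of the \emph{projection} $y_k$, rather than of $x_k$, that delivers $a_t^T(x_k - y_k) \ge \beta_k$. Everything else is the standard Fej\'er-monotonicity expansion.
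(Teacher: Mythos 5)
Your proof is correct, and it follows essentially the same route the paper takes: the paper does not prove Proposition \ref{agmon} separately but notes it is proven ``in the style of \cite{leventhallewis},'' and indeed your argument (Fej\'er-type expansion about $\mathcal{P}(x_k)$, the cross-term bound $a_t^T(x_k - \mathcal{P}(x_k)) \ge a_t^T x_k - b_t$, then the Hoffman bound) is precisely the deterministic $\beta = m$ specialization of the paper's proof of Theorem \ref{metatheorem1}. The only cosmetic gap is the unproved assertion $L_\infty \ge 1$, which does hold for normalized systems (since $d(x,P) \ge \|(Ax-b)^+\|_\infty$ when each $\|a_i\|=1$) but deserves the one-line justification.
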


A bad feature of the standard version of the relaxation method using real-valued data is that when the system $Ax \leq
b$ is infeasible it cannot terminate, as there will always be a violated inequality.  In the 1980's the relaxation method was revisited with
interest because of its similarities to the ellipsoid method (see \cite{amaldihauser,betkelp,goffin,telgen} and references therein). One can show that the 
relaxation method is finite in all cases when using rational data, in that it can be modified to detect infeasible systems. In some special cases 
the method gives a polynomial time algorithm (e.g. for totally unimodular matrices\cite{maurrasetal}), but there are also examples of exponential running times 
(see \cite{goffinnonpoly,telgen}).  In late 2010, Chubanov \cite{chubanov}, announced a modification of the traditional relaxation style method, which
gives a \emph{strongly polynomial}-time algorithm in some situations \cite{deloerabasujunod,veghzambelli}. 
Unlike \cite{agmon,motzkinschoenberg}, who only projected onto the original hyperplanes that describe the polyhedron 
$P$, Chubanov \cite{chubanov} projects onto new, auxiliary inequalities which are linear combinations of the input. See 
Figure \ref{fig:ind_adv} for an example of this process.  

\begin{figure}[h]
\begin{center}
\includegraphics[scale=0.42]{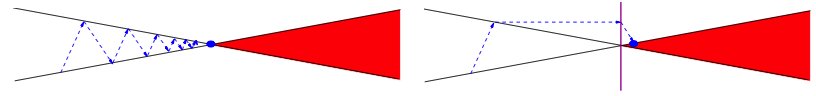}
\end{center}
\caption{Left: Projecting onto original hyperplanes. Right: Projecting onto an induced hyperplane (like those in Chubanov's method).}
\label{fig:ind_adv}
\end{figure}

{\bfseries Kaczmarz method.} The second research branch is that of the 
Kaczmarz method \cite{kaczmarzoriginal,RefWorks:492} which is one of the most popular solvers of overdetermined
systems of linear equations due to its speed and simplicity. Just like Motzkin's, it is an iterative method which consists of a series of alternating
orthogonal projections onto the hyperplanes defined by the system of equations.   The original  Kaczmarz method simply cycles through the equations 
sequentially, so its convergence rate depends on the order of the rows. 
One way to overcome this is to use the equations in a \emph{random order}, rather than sequentially \cite{RefWorks:491,RefWorks:320,RefWorks:514}.  
More precisely, we begin with  $Ax\leq b$, a linear system of inequalities where $A$ is an $m\times n$ matrix with rows $a_i$ and $x_0$ an initial guess.  
For $k = 0, 1, 2, ...$ one defines $$x_{k+1} = x_k - \frac{(\langle a_i, x_k \rangle - b_i)^+}{\|a_i\|^2_2}a_i$$ where $i$ is chosen from $\{1,2,...,m\}$ at random, 
say with probability proportional to $\|a_i\|_2^2$.  Thus, $x_k$ is the projection of $x_{k-1}$ onto the hyperplane $\{x | a_i^Tx = b_i\}$.
Strohmer and Vershynin \cite{strohmervershynin} provided an elegant convergence analysis of the randomized Kaczmarz method for consistent equations.  Later, 
Leventhal and Lewis \cite{leventhallewis} extended the probabilistic analysis from systems of equations to systems of linear inequalities. They focused on giving bounds on the
convergence rate that take into account the numerical conditions captured by the Hoffman constants $L_\infty$ and 
$L_2$.  If one additionally makes use of a projection parameter, $\lambda \not= 1$, you can easily extend the convergence rate in \cite{leventhallewis} to account for this:

\begin{proposition}\label{lewisleventhal}
If the feasible region, $P$,  is nonempty then the Randomized Kaczmarz method with projection parameter $\lambda$ converges linearly in expectation: $$\E[d(x_{k},P)^2] \le \bigg(1 - \frac{2\lambda - \lambda^2}{\|A\|_F^2 L_2^2}\bigg)^k d(x_0,P)^2.$$
\end{proposition}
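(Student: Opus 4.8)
The plan is to establish a one-step expected contraction and then iterate, closely following the inequality analysis of Leventhal and Lewis \cite{leventhallewis} but carrying the projection parameter $\lambda$ through the algebra. Fix the current iterate $x_k$ and let $y_k$ denote its orthogonal projection onto $P$, so that $d(x_k,P) = \|x_k - y_k\|$. Since $y_k \in P$, we have $d(x_{k+1},P)^2 \le \|x_{k+1}-y_k\|^2$, so it suffices to control the latter. Writing the $\lambda$-relaxed update as $x_{k+1} = x_k - \lambda c\, a_i$ with $c = (\langle a_i, x_k\rangle - b_i)^+/\|a_i\|_2^2$, I would expand
$$\|x_{k+1}-y_k\|^2 = \|x_k - y_k\|^2 - 2\lambda c\langle a_i, x_k - y_k\rangle + \lambda^2 c^2\|a_i\|_2^2.$$

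The crux of the argument---and the step I expect to require the most care---is bounding the cross term using feasibility of $y_k$. Because $y_k \in P$ satisfies $\langle a_i, y_k\rangle \le b_i$, we get $\langle a_i, x_k - y_k\rangle \ge \langle a_i, x_k\rangle - b_i$; when the sampled constraint is violated this right-hand side equals $c\|a_i\|_2^2 > 0$, and when it is not violated $c = 0$ so the update is trivial. In either case $-2\lambda c\langle a_i, x_k-y_k\rangle \le -2\lambda c^2\|a_i\|_2^2$, which collapses the expansion to $d(x_{k+1},P)^2 \le d(x_k,P)^2 - (2\lambda - \lambda^2)\,c^2\|a_i\|_2^2$. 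The concave factor $2\lambda - \lambda^2$ emerges precisely here and is nonnegative on $(0,2]$, explaining the stated range of admissible $\lambda$; the careful handling of the positive-part operator is what keeps the inequality valid for both active and inactive constraints.

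Finally I would take the conditional expectation over the random index $i$, drawn with probability $p_i = \|a_i\|_2^2/\|A\|_F^2$ (using $\sum_i \|a_i\|_2^2 = \|A\|_F^2$). Substituting $c^2\|a_i\|_2^2 = ((\langle a_i, x_k\rangle - b_i)^+)^2/\|a_i\|_2^2$, the weight $\|a_i\|_2^2$ cancels and the sum reassembles into the squared residual:
$$\E\!\left[d(x_{k+1},P)^2 \mid x_k\right] \le d(x_k,P)^2 - \frac{2\lambda - \lambda^2}{\|A\|_F^2}\,\|(Ax_k - b)^+\|_2^2.$$
Applying the Hoffman bound $d(x_k,P) \le L_2\|(Ax_k-b)^+\|_2$ to replace the residual by the distance yields the one-step contraction factor $1 - (2\lambda-\lambda^2)/(\|A\|_F^2 L_2^2)$. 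Taking total expectations and iterating over $k$ via the tower property then gives the claimed geometric rate. No separate obstacle remains beyond the cross-term sign bound; the remaining steps are bookkeeping.
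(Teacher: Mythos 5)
Your proof is correct and follows essentially the same route the paper intends: the Leventhal--Lewis expansion around the projection onto $P$, the cross-term bound $\langle a_i, x_k - y_k\rangle \ge \langle a_i, x_k\rangle - b_i$ producing the $(2\lambda-\lambda^2)$ factor, expectation under $\|a_i\|_2^2$-weighted sampling, and the Hoffman bound $d(x_k,P)\le L_2\|(Ax_k-b)^+\|_2$. This is precisely the argument the paper invokes by citing \cite{leventhallewis} and is the same machinery used in its proof of Theorem \ref{metatheorem1}, so nothing further is needed.
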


Note the similarities between Propositions \ref{agmon} and \ref{lewisleventhal}: the convergence rate constants are identical for normalized systems ($\|A\|_F^2 = m$).

The work of Strohmer and Vershynin sparked a new interest in the Kaczmarz approach and there have been many recent developments in the method and its analysis.  Needell \cite{RefWorks:300} extended this work to the case of inconsistent systems of equations, showing exponential convergence down to some fixed \textit{convergence horizon}, see also \cite{wang2015randomized}.  In order to break this convergence horizon, one needs to modify the Kaczmarz method since by design it projects exactly onto a given hyperplane.  Zouzias and Freris \cite{RefWorks:297} analyzed an extended randomized Kaczmarz method which incorporates an additional projection step to reduce the size of the residual.  This was extended to the block case in \cite{needell2015randomized}.  The relation of these approaches to coordinate descent and gradient descent methods has also been recently studied, see e.g. \cite{griebel2012greedy,dumitrescu2014relation,needell2014stochastic,oswald2015convergence,MaConvergence15,HefnyRows15,oswald2015convergence,gower2015randomized}. 

Other variations to the Kaczmarz method include block methods \cite{RefWorks:494,RefWorks:493,RefWorks:298,needelltropp,needellbriskman,RefWorks:562} which have been shown to offer acceleration for certain systems of equations with fast-multipliers.  Other acceleration and convergence schemes focus on sampling selections \cite{agaskar2014randomized,RefWorks:295,needell2014stochastic2,oswald2015random}, projection parameters \cite{RefWorks:478,RefWorks:481,RefWorks:480,RefWorks:319}, adding row directions \cite{RefWorks:477}, parallelized implementations \cite{liu2014asynchronous,avron2014revisiting}, structure exploiting approaches \cite{liu2015accelerated,li2015accelerating}, and the use of preconditioning \cite{gallopoulos2016preconditioners}.  Some other references on recent work include \cite{RefWorks:498,RefWorks:501}

For the most part, it seems that these two branches of research which address the same problems have been developing disjointly from each other. For example, the idea of taking linear combinations of the constraints was first exploited in \cite{chubanov}, but was recently re-discovered and reproduced  for linear equations in \cite{gower2015randomized}, but the authors seem unaware of the optimizers work in  the more general setting of linear inequalities in \cite{chubanov,deloerabasujunod,veghzambelli}. Another example is the manipulation of the projection parameter $\lambda$ \cite{RefWorks:478,RefWorks:481,RefWorks:480,RefWorks:319}. It is a goal of this paper to bridge the separation between these two branches of research that essentially study the same iterative projection procedure. In this paper we explore a family of hybrid algorithms that use elements from both groups of research.

\subsection{Our contribution: the Sampling Kaczmarz-Motzkin method}
Despite the similarity between the Kaczmarz and Motzkin methods (the difference only being in the selection criterion), work on these approaches has remained for the most disjoint.
Our proposed family of methods, which we refer to as the \emph{Sampling Kaczmarz-Motzkin} (SKM) methods, are intended to balance the pros and cons of these related methods.  
Namely, the relaxation method forms iterates whose distance to the polyhedral solution space are monotonically decreasing; however, the time required to choose the most violated hyperplane in each iteration is costly.  Conversely, the Randomized Kaczmarz method has a very inexpensive cost per iteration; however, the method has slow convergence when many of the constraints are satisfied. Our methods will still have a probabilistic choice, like in randomized Kaczmarz,  but make strong use of the  maximum violation criterion within this random sample of the constraints.  Our method is easily seen to interpolate between what was proposed in \cite{leventhallewis} and in \cite{motzkinschoenberg}.

\begin{method*} [SKM method]
Suppose $A \in \mathbb{R}^{m \times n}$, $b \in \mathbb{R}^m$.  Let $x_0 \in \mathbb{R}^n$ be given.  Fix $0 < \lambda \le 2$.  We iteratively construct approximations to a solution lying in $P$ in the following way:  
\begin{enumerate}
\item{Choose a sample of $\beta$ constraints, $\tau_k$, uniformly at random from among the rows of $A$.}
\item{From among these $\beta$ constraints, choose $t_k := \underset{i \in \tau_k}{\text{argmax}} \; a_i^T x_{k-1} - b_i$.}
\item{Define $x_k := x_{k-1} - \lambda \frac{(a_{t_k}^T x_{k-1} - b_{t_k})^+}{\|a_{t_k}\|^2}a_{t_k}$.}
\item{Repeat.}
\end{enumerate}
\end{method*}

\vskip 12pt
\noindent {\bf Remark:} the SKM method with $\beta = m$ recovers the Motzkin relaxation methods, while the SKM method with $\beta = 1$ 
gives a variant of the randomized Kaczmarz method.  We now state our first main result.

\begin{theorem}\label{metatheorem1} 
Let $A$ be normalized so $\|a_i\|^2 = 1$ for all rows $i$.
{If the feasible region $P$ is nonempty then the SKM method with samples of size $\beta$ converges at least linearly in expectation and the bound on the rate depends on the number of satisfied constraints in the system $Ax \leq b$. More precisely, let $s_{k-1}$ be the number of satisfied constraints after iteration $k-1$ and $V_{k-1} = \max \{m - s_{k-1}, m-\beta + 1\}$; then, in the $k$th iteration, 
$$\mathbb{E}[d(x_k,P)^2] \le \bigg(1 - \frac{2\lambda - \lambda^2}{V_{k-1} L_2^2} \bigg) d(x_{k-1},P)^2 \le \bigg(1 - \frac{2\lambda - \lambda^2}{m L_2^2}\bigg)^k d(x_0, P)^2.$$}\label{linearconvergence} 
\end{theorem}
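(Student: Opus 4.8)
The plan is to follow the template of the Leventhal--Lewis/Motzkin one-step analysis and then insert a combinatorial estimate that captures the effect of sampling. Write $r_i := (a_i^T x_{k-1} - b_i)^+$ for the residuals at step $k$, and note that because $t_k$ is the $\mathrm{argmax}$ of $a_i^T x_{k-1} - b_i$ over the sample $\tau_k$, the scalar driving the update is exactly $(a_{t_k}^T x_{k-1} - b_{t_k})^+ = \max_{i \in \tau_k} r_i$ (this identity covers both the case of a genuine projection and the case where every sampled constraint is already satisfied, in which case the step does nothing). First I would fix the sample, let $y$ denote the projection of $x_{k-1}$ onto $P$ so that $d(x_{k-1},P)^2 = \|x_{k-1}-y\|^2$, and expand the update using the normalization $\|a_{t_k}\|=1$. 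Since $y \in P$ gives $a_{t_k}^T y \le b_{t_k}$, the cross term is bounded below by the squared residual, yielding the deterministic one-step inequality $d(x_k,P)^2 \le d(x_{k-1},P)^2 - (2\lambda-\lambda^2)\left(\max_{i\in\tau_k} r_i\right)^2$.

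Taking expectation over the uniform choice of $\tau_k$ then reduces everything to a lower bound on $\mathbb{E}\big[(\max_{i\in\tau_k} r_i)^2\big]$, which is the heart of the argument and the step I expect to be the main obstacle. I would compute this expectation exactly via order statistics: sorting the residuals increasingly and observing that $\max_{i\in\tau_k} r_i$ equals a given order statistic precisely when that index is sampled and no larger one is, I obtain $\mathbb{E}[(\max_{i\in\tau_k}r_i)^2] = \binom{m}{\beta}^{-1}\sum_{i} r_{[i]}^2 \binom{i-1}{\beta-1}$. The goal is to show this weighted average dominates $\frac{1}{V_{k-1}}\sum_i r_i^2$, that is, the uniform weighting scaled by $1/V_{k-1}$.

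To prove that domination I would use a majorization/Abel-summation argument: for a decreasing sequence of squared residuals, the sampling-weighted sum exceeds the uniformly weighted one as soon as the partial sums (taken over the largest residuals) of the sampling weights dominate those of the uniform weights. The cumulative sampling weight on the $\ell$ largest residuals is exactly $\mathbb{P}(\tau_k \text{ meets the } \ell \text{ largest residuals}) = 1 - \binom{m-\ell}{\beta}/\binom{m}{\beta}$, so the required inequality is $1 - \binom{m-\ell}{\beta}/\binom{m}{\beta} \ge \ell/V_{k-1}$. I would verify this using that $\ell \mapsto \binom{m-\ell}{\beta}$ is discretely convex (its second difference is $\binom{m-\ell-2}{\beta-2}\ge 0$), hence the left-hand side is concave in $\ell$, vanishes at $\ell=0$, and equals $1$ at $\ell = m-\beta+1$. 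Only the first $m - s_{k-1}$ residuals are nonzero, so the partial-sum condition is needed only for $\ell \le m - s_{k-1}$; the two regimes $s_{k-1} \ge \beta-1$ and $s_{k-1} < \beta-1$ are precisely what determine whether the concave chord first reaches $1$ at $\ell = m-\beta+1$ or at $\ell = m - s_{k-1}$, which produces the bound with $\max\{m-s_{k-1},\, m-\beta+1\} = V_{k-1}$.

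Finally I would assemble the pieces. The combinatorial bound gives $\mathbb{E}[(\max_{i\in\tau_k}r_i)^2] \ge \frac{1}{V_{k-1}}\|(Ax_{k-1}-b)^+\|_2^2$, and the Hoffman inequality $d(x_{k-1},P) \le L_2 \|(Ax_{k-1}-b)^+\|_2$ converts the sum of squared residuals into $d(x_{k-1},P)^2/L_2^2$. Substituting into the one-step inequality yields $\mathbb{E}[d(x_k,P)^2 \mid x_{k-1}] \le \big(1 - (2\lambda-\lambda^2)/(V_{k-1}L_2^2)\big)\,d(x_{k-1},P)^2$, the first claimed bound. Since $\beta \ge 1$ forces $V_{k-1} \le m$, the contraction factor is at most $1 - (2\lambda-\lambda^2)/(m L_2^2)$ at every step; taking total expectations and iterating this deterministic factor via the tower property then gives the second inequality.
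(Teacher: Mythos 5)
Your proof is correct, and its skeleton coincides with the paper's: the same one-step expansion against the projection $\mathcal{P}(x_{k-1})$ giving $d(x_k,P)^2 \le d(x_{k-1},P)^2 - (2\lambda-\lambda^2)\big(\max_{i\in\tau_k} r_i\big)^2$, the same exact order-statistics formula $\mathbb{E}\big[(\max_{i\in\tau_k} r_i)^2\big] = \binom{m}{\beta}^{-1}\sum_i \binom{i-1}{\beta-1} r_{[i]}^2$, then the Hoffman bound and iteration via $V_{k-1}\le m$. Where you genuinely differ is in the central combinatorial step, lower-bounding this weighted sum by $\frac{1}{V_{k-1}}\|(Ax_{k-1}-b)^+\|_2^2$. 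The paper pairs the increasing weights $\binom{i-1}{\beta-1}$ with the nondecreasing sorted squared residuals and applies a Chebyshev-type averaging lemma (Lemma~\ref{summationaverage}): each weight is replaced by the average weight, the average is evaluated via the column-sum identity $\sum_{\ell=0}^{m-\beta}\binom{\beta-1+\ell}{\beta-1} = \binom{m}{\beta}$, and then a separate truncation argument shows the resulting uniform sum over the top $m-\beta+1$ sorted entries captures at least a $\min\big\{\frac{m-\beta+1}{m-s_{k-1}},1\big\}$ fraction of $\|(Ax_{k-1}-b)^+\|_2^2$; the $\max$ defining $V_{k-1}$ emerges from that last step. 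You instead compare cumulative weights: by Abel summation it suffices that $\mathbb{P}(\tau_k \text{ meets the top } \ell) = 1 - \binom{m-\ell}{\beta}/\binom{m}{\beta} \ge \ell/V_{k-1}$ for all $\ell \le m-s_{k-1}$, which you deduce from discrete concavity (second difference $\binom{m-\ell-2}{\beta-2}\ge 0$) together with the endpoint values $0$ at $\ell=0$ and $1$ at $\ell \ge m-\beta+1$. Both routes are sound, and your case analysis on $s_{k-1}\ge \beta-1$ versus $s_{k-1}<\beta-1$ correctly reproduces $V_{k-1}$. The paper's argument is more elementary, resting on one self-contained averaging lemma plus a binomial identity; yours isolates the exact stochastic-dominance condition the sampling weights must satisfy, handles both regimes of $V_{k-1}$ in a single chord-versus-concave-function comparison rather than a separate truncation, and gives the partial sums a transparent probabilistic meaning, which makes it clearer why $m-\beta+1$ and $m-s_{k-1}$ are precisely the two quantities competing in the rate.
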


Our second main theoretical result notes that, for rational data, one can provide a \emph{certificate of feasibility} after finitely many iterations of SKM.  
This is an extension of the results by Telgen \cite{telgen} who also noted the connection between relaxation techniques and the ellipsoid method.
To explain what we mean by a certificate of feasibility we recall the length of the binary encoding of a linear feasibility problem with rational data is 
$$\sigma = \underset{i}{\sum}\underset{j}{\sum} \log(|a_{ij}| + 1) + \underset{i}{\sum} \log(|b_i| + 1) + \log nm + 2.$$ 
Denote the maximum violation of a point $x \in \mathbb{R}^n$ as 
$\theta(x) = \max \{ 0, \underset{i}{\max} \{a_i^T x - b_i \} \}. $

Telgen's proof of the finiteness of the relaxation method makes use of the following lemma (which is key in demonstrating that Khachian's ellipsoidal algorithm is finite and polynomial-time \cite{khachiyan}):

\begin{lemma}\label{lem:infeasible} If the rational system $ A x \le b$ is infeasible, then for all $x \in \mathbb{R}^n,$ the maximum violation satisfies $\theta(x) \ge 2 * 2^{-\sigma}$.
\end{lemma}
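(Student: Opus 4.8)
The plan is to prove the estimate in contrapositive spirit by producing, for any infeasible rational system, a single ``uniform'' dual certificate that forces every point to violate some constraint by a definite amount. Since $\theta(x) = \max\{0, \max_i(a_i^T x - b_i)\}$, the bound $a_i^T x - b_i \le \theta(x)$ holds for every row $i$ and every $x$; the whole content of the lemma is to bound $\theta(x)$ below uniformly in $x$. First I would invoke Farkas' lemma in the form: the system $Ax \le b$ is infeasible precisely when there is a vector $y \ge 0$ with $A^T y = 0$ and $b^T y < 0$. Rescaling $y$ by a positive constant, I may normalize so that $b^T y = -1$, placing $y$ in the nonempty polyhedron $Q := \{y \ge 0 : A^T y = 0,\ b^T y = -1\}$.

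The core of the argument is a one-line pairing. For an arbitrary $x \in \mathbb{R}^n$, using $A^T y = 0$ and $b^T y = -1$,
\begin{equation*}
1 = y^T(Ax - b) = \sum_{i=1}^m y_i\,(a_i^T x - b_i) \le \sum_{i=1}^m y_i\,\theta(x) = \theta(x)\,\|y\|_1,
\end{equation*}
where the inequality uses $y_i \ge 0$ together with $a_i^T x - b_i \le \theta(x)$. Hence $\theta(x) \ge 1/\|y\|_1$ for every $x$, and it remains only to exhibit a certificate $y \in Q$ whose $\ell_1$-norm is controlled, namely $\|y\|_1 \le 2^{\sigma-1} = \tfrac12 2^{\sigma}$, which then yields $\theta(x) \ge 2^{1-\sigma} = 2\cdot 2^{-\sigma}$.

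To control $\|y\|_1$ I would pass from an arbitrary certificate to a vertex of $Q$. Because $Q$ lies in the pointed cone $\{y \ge 0\}$, it has a vertex $y^\star$, and at a vertex the active constraints pin $y^\star$ down as the solution of a square subsystem extracted from the $(n+1)\times m$ coefficient matrix of the equalities $A^T y = 0,\ b^T y = -1$, with right-hand side $(0,\dots,0,-1)^T$; in particular $y^\star$ has at most $n+1$ nonzero entries. Cramer's rule then writes each nonzero entry as a ratio of determinants of integer matrices (after clearing the denominators of the rational data), whose denominator is a nonzero integer, hence at least $1$ in absolute value, and whose numerators are bounded via Hadamard's inequality. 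This is exactly the determinant estimate underlying the polynomiality of Khachiyan's ellipsoid method \cite{khachiyan}, and with the specific binary encoding length $\sigma$ defined above it yields $\|y^\star\|_1 \le 2^{\sigma-1}$. Combining this with the pairing inequality and taking $y = y^\star$ completes the proof.

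I expect the main obstacle to be this final determinant bound: carefully reducing to a basic (vertex) certificate, clearing denominators so that Cramer's rule applies to integer matrices, and tracking the Hadamard estimate against the exact definition of $\sigma$ (in particular the roles of the $\log nm$ and the $+2$ terms) so that the constant emerges as precisely $2\cdot 2^{-\sigma}$ rather than a weaker polynomial-in-$2^{\sigma}$ factor. The Farkas reduction and the pairing step are routine; all the delicacy sits in the encoding-length accounting, which is classical but must be carried out with care.
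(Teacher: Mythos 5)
The paper does not prove this lemma at all: it is quoted as a known ingredient of Khachiyan's ellipsoid-method analysis (via Telgen), so there is no in-paper argument to compare against. Your proposal is a correct reconstruction of that classical proof: Farkas' lemma gives $y\ge 0$ with $A^Ty=0$, $b^Ty=-1$; the pairing $1=y^T(Ax-b)\le \theta(x)\lVert y\rVert_1$ is valid since $a_i^Tx-b_i\le\theta(x)$ for every $i$; and the polyhedron $Q$ is pointed (it sits in the nonnegative orthant), so it has a basic feasible solution $y^\star$ with at most $n+1$ nonzero entries given by Cramer's rule. The accounting you flag as the delicate step does close: each numerator determinant, after expanding along the right-hand-side column $(0,\dots,0,-1)^T$, is a determinant of a square submatrix of the integer data, hence bounded by $\prod_{i,j}(|a_{ij}|+1)\prod_i(|b_i|+1)=2^{\sigma-\log nm-2}$, while the denominator is a nonzero integer; thus $\lVert y^\star\rVert_1\le (n+1)\cdot 2^{\sigma-2}/(nm)\le 2\cdot 2^{\sigma-2}=2^{\sigma-1}$, giving $\theta(x)\ge 2^{1-\sigma}$ exactly as required (with a factor $m$ to spare). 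The only loose end is the word ``rational'': the encoding length $\sigma$ as defined in the paper implicitly treats $A,b$ as integer, so one should either assume integrality or note that clearing denominators is absorbed into the definition of $\sigma$ --- a gloss the paper itself makes as well.
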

Thus, to detect feasibility of the rational system $Ax \le b$, we need only find a point, $x_k$ with $\theta(x_k) < 2 * 2^{-\sigma}$; such a point will be called a \emph{certificate of feasibility}.

In the following theorem, we demonstrate that we expect to find a certificate of feasibility, when the system is feasible, and that if we do not find a certificate after finitely many iterations, we can put a lower bound on the probability that the system is infeasible.  
Furthermore, if the system is feasible, we can bound the probability of finding a certificate of feasibility.

\begin{theorem} \label{metatheorem2}  
Suppose $A, b$ are rational matrices with binary encoding length, $\sigma$, and that we run an SKM method on the normalized 
system $\tilde{A} x \le \tilde{b}$ \Big(where $\tilde{a}_i = \frac{1}{||a_i||}a_i$ and $\tilde{b}_i = \frac{1}{||a_i||}b_i$\Big) with $x_0 = 0$.  Suppose
the number of iterations $k$ satisfies
$$k> \frac{4\sigma - 4 - \log n + 2 \log \bigg(\underset{j \in [m]}{\max} ||a_j||\bigg)}{\log \bigg(\frac{mL_2^2}{mL_2^2 - 2 \lambda + \lambda^2}\bigg)}.$$ 
If the system $A x\le b$ is feasible, the probability that the iterate $x_k$ is not a certificate of feasibility is at most

$$\frac{\max ||a_j|| \; 2^{2\sigma-2}}{n^{1/2}} \bigg(1 - \frac{2 \lambda - \lambda^2}{m L_2^2}\bigg)^{k/2},$$

which decreases with $k$.
\end{theorem}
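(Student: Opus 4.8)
The plan is to translate the event ``$x_k$ is not a certificate of feasibility'' into a statement purely about the distance $d(x_k,P)$, and then to control that distance in expectation using Theorem~\ref{metatheorem1} together with Markov's inequality. Since $x_0 = 0$, the quantity $d(x_0,P)$ appearing in Theorem~\ref{metatheorem1} is exactly the norm of the minimum-norm feasible point, which for rational data admits an explicit upper bound in terms of $\sigma$ and $n$; this is the feasibility counterpart of Lemma~\ref{lem:infeasible}.

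First I would establish the geometric link between the certificate threshold and the distance to $P$. Because we run SKM on the normalized system (all $\|\tilde{a}_i\| = 1$) while $P$ is unchanged by normalization, for any closest point $y \in P$ to $x_k$ and any row $i$ we have $\tilde{a}_i^T x_k - \tilde{b}_i \le \tilde{a}_i^T(x_k - y) \le \|x_k - y\| = d(x_k,P)$, so the maximum normalized violation is at most $d(x_k,P)$. Converting back to the original constraints via $a_i^T x_k - b_i = \|a_i\|(\tilde{a}_i^T x_k - \tilde{b}_i)$ gives
$$\theta(x_k) \le \Big(\max_{j \in [m]} \|a_j\|\Big)\, d(x_k,P).$$
Consequently, if $d(x_k,P) < 2 \cdot 2^{-\sigma} / \max_j \|a_j\| =: \epsilon$, then $\theta(x_k) < 2 \cdot 2^{-\sigma}$ and $x_k$ is a certificate. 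Contrapositively, the failure event is contained in $\{d(x_k,P) \ge \epsilon\}$.

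Next I would assemble the probabilistic estimate. Applying Theorem~\ref{metatheorem1} to the normalized system yields $\E[d(x_k,P)^2] \le \rho^k d(x_0,P)^2$ with $\rho = 1 - (2\lambda - \lambda^2)/(mL_2^2)$, and Jensen's inequality upgrades this to $\E[d(x_k,P)] \le \rho^{k/2} d(x_0,P)$, which is exactly what produces the exponent $k/2$ in the statement. Markov's inequality then gives
$$\Nc\big(x_k \text{ not a certificate}\big) \le \Nc\big(d(x_k,P) \ge \epsilon\big) \le \frac{\E[d(x_k,P)]}{\epsilon} \le \frac{\rho^{k/2}\, d(x_0,P)\, \max_j \|a_j\|}{2 \cdot 2^{-\sigma}}.$$
Substituting the rational bound on $d(x_0,P) = d(0,P)$ in terms of $\sigma$ and $n$ collapses the constants to a coefficient of the form $\max_j \|a_j\|\, 2^{2\sigma - 2}/n^{1/2}$, yielding the claimed bound; one then checks that the hypothesis on $k$ is precisely the condition forcing this bound below $1$ (take logarithms and recall that $\log(1/\rho)$ is the stated denominator), so that a certificate is expected.

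The main obstacle is the norm bound on $d(0,P)$: I must invoke the classical rational-encoding estimate (from the Khachiyan/Telgen analysis behind Lemma~\ref{lem:infeasible}) guaranteeing that a nonempty rational polyhedron contains a point whose norm is controlled by $2^\sigma$ and $n$, and track its constant carefully so that it combines with the normalization factor $\max_j \|a_j\|$ to reproduce the stated coefficient. The remaining work is bookkeeping: applying the normalization consistently when passing between the original and normalized violations, and confirming that $\Nc$ denotes probability over the random samples $\tau_1, \dots, \tau_k$.
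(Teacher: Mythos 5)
Your proposal is correct and follows essentially the same route as the paper's proof: bound $d(x_0,P)\le 2^{\sigma-1}/n^{1/2}$ via the rational-encoding lemma (Lemma~\ref{lem:feasible}), use the geometric fact that the normalized violation is at most $d(x_k,P)$, invoke Theorem~\ref{metatheorem1} with Jensen to get $\E[d(x_k,P)]\le\rho^{k/2}d(x_0,P)$, and finish with Markov's inequality. The only (cosmetic) difference is that you apply Markov directly to $d(x_k,P)$ after containing the failure event in $\{d(x_k,P)\ge 2^{1-\sigma}/\max_j\|a_j\|\}$, whereas the paper applies Markov to the normalized violation $\tilde{\theta}(x_k)$ and then bounds $\E[\tilde{\theta}(x_k)]\le\E[d(x_k,P)]$; the two orderings yield the identical bound.
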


\vskip 12pt

The final contribution of our paper is a small computational study presented in Section \ref{sec:exp}.  
The main purpose of our experiments  is not to compare the running times versus established methods.
Rather, we wanted to determine how our new algorithms  compare with the classical algorithms of Agmon, Motzkin and Schoenberg, and Kaczmarz.  
We examine how the sampling and projection parameters affects the performance of SKM. We try different types of data, but we  
  assume in most of the data that the number of rows $m$ is large, much larger than $n$. The reason is that this is the regime in which the SKM methods 
  are most relevant and often the only alternative. Iterated-project methods are truly interesting in cases where the number of constraints is very large (possibly
  so large it is unreadable in memory) or when the constraints can only be sampled due to uncertainty or partial information. 
  Such regimes arise naturally in applications of machine learning \cite{CAELbook} and in online linear programming (see \cite{agrawaletal} and its references).
Finally, it has already been shown in prior experiments that, for typical small values of $m,n$ where the system
can be read entirely,  iterated-projection methods are not able to compete with the simplex method (see \cite{deloerabasujunod,hoffmanexp}). Here we 
compare our SKM code with MATLAB's interior-point methods and active set methods code. We also compare SKM with another iterated projection method, the block Kaczmarz method \cite{needelltropp}.


\section{Proof of Theorem \ref{metatheorem1}}

We show that the SKM methods enjoy a linear rate of convergence. We begin with a simple useful observation.

\begin{lemma}\label{summationaverage}
Suppose $\{a_i\}_{i=1}^n, \{b_i\}_{i=1}^n$ are real sequences so that $a_{i+1} > a_{i} > 0$ and $b_{i+1} \ge b_{i} \ge 0$.  Then  
$$\underset{i=1}{\overset{n}{\sum}} a_i b_i \ge \underset{i=1}{\overset{n}{\sum}} \bar{a} b_i,
\text{ where } \bar{a} \text{ is the average } \bar{a} = \frac{1}{n}\sum_{i=1}^n a_i.$$
\end{lemma}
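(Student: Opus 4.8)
The plan is to recognize this statement as a form of \emph{Chebyshev's sum inequality} for two similarly-ordered sequences, and to prove it by a short symmetric double-sum argument. First I would rewrite the claim in the equivalent form $\sum_{i=1}^n (a_i - \bar{a}) b_i \ge 0$, which is legitimate since $\sum_i a_i b_i - \sum_i \bar{a} b_i = \sum_i (a_i - \bar{a}) b_i$. This isolates the essential content of the lemma: the deviations $a_i - \bar{a}$ are positively correlated with the non-decreasing weights $b_i$, so replacing each $a_i$ by the average $\bar{a}$ can only decrease the weighted sum.

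The cleanest route to this is the algebraic identity
$$\sum_{i=1}^n \sum_{j=1}^n (a_i - a_j)(b_i - b_j) = 2n \sum_{i=1}^n a_i b_i - 2\bigg(\sum_{i=1}^n a_i\bigg)\bigg(\sum_{i=1}^n b_i\bigg),$$
which I would verify by expanding the left-hand side into its four pieces. The key observation is that the left-hand side is a sum of nonnegative terms: since $a_{i+1} > a_i$ and $b_{i+1} \ge b_i$, the factors $a_i - a_j$ and $b_i - b_j$ always share the same sign (or one of them vanishes), so each product $(a_i - a_j)(b_i - b_j) \ge 0$. Hence the double sum is nonnegative, and rearranging the identity yields $\sum_i a_i b_i \ge \frac{1}{n}\big(\sum_i a_i\big)\big(\sum_i b_i\big) = \bar{a}\sum_i b_i$, which is exactly the desired inequality.

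As an alternative I would keep in reserve a direct monotonicity argument: because $a_i - \bar{a}$ is strictly increasing with zero mean, it is negative for small $i$ and positive for large $i$, changing sign at some index $k$. Choosing a reference value $b^{*}$ between $b_k$ and $b_{k+1}$, the sign of $b_i - b^{*}$ then matches the sign of $a_i - \bar{a}$ for every $i$, so $(a_i - \bar{a})(b_i - b^{*}) \ge 0$; summing and using $\sum_i (a_i - \bar{a}) = 0$ gives $\sum_i (a_i - \bar{a}) b_i \ge b^{*}\sum_i (a_i - \bar{a}) = 0$.

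There is essentially no serious obstacle here, as the result is a classical inequality. The only step requiring genuine care is the sign-matching of the factors $(a_i - a_j)$ and $(b_i - b_j)$, since this is precisely where the hypotheses are used: the two sequences must be ordered in the \emph{same} direction. I would note that strictness of the increase in $a$ and the nonnegativity of $b$ are not actually needed for the inequality itself—only the common (weak) ordering is essential—so the statement holds under slightly weaker assumptions than those given.
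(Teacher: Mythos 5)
Your primary argument is correct, and it takes a genuinely different route from the paper. You prove the statement as an instance of Chebyshev's sum inequality via the symmetrization identity
$$\sum_{i=1}^n \sum_{j=1}^n (a_i - a_j)(b_i - b_j) \;=\; 2n \sum_{i=1}^n a_i b_i \;-\; 2\Big(\sum_{i=1}^n a_i\Big)\Big(\sum_{i=1}^n b_i\Big),$$
observing that each term of the double sum is nonnegative because the two sequences are similarly ordered. The paper instead works directly with the deviations $c_i := n a_i - \sum_j a_j$: since these are strictly increasing and sum to zero, they change sign at some index $k$, and comparing each $b_i$ against the pivot value $b_k$ gives $\sum_i c_i b_i \ge b_k \sum_i c_i = 0$. (Your ``reserve'' monotonicity argument with the reference value $b^{*}$ is essentially the paper's proof, with $b^{*} = b_k$.) The trade-off: the paper's pivot argument is self-contained and requires no algebraic identity, but it forces a choice of sign-change index and a case split; your double-sum argument is symmetric, needs no pivot, and makes it transparent that the hypotheses are stronger than necessary --- only the common weak ordering of the two sequences matters, not strict increase of $\{a_i\}$ nor nonnegativity of $\{b_i\}$. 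Your closing remark on this point is accurate: the paper's own proof also never truly uses $b_i \ge 0$, only monotonicity, so the lemma indeed holds under the weaker assumptions you identify.
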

\begin{proof}
Note that $\underset{i=1}{\overset{n}{\sum}} a_i b_i = \underset{i=1}{\overset{n}{\sum}} \bar{a} b_i + \underset{i=1}{\overset{n}{\sum}} (a_i - \bar{a})b_i$, so we need only show that $\underset{i=1}{\overset{n}{\sum}} (a_i - \bar{a})b_i \ge 0$, which is equivalent to $\underset{i=1}{\overset{n}{\sum}} (n a_i - \underset{j=1}{\overset{n}{\sum}} a_j)b_i \ge 0$, so we define the coefficients $c_i :=  n a_i - \underset{j=1}{\overset{n}{\sum}} a_j$.  Now, since $\{a_i\}_{i=1}^n$ is strictly increasing, there is some $1 < k < n$ so that $c_k \le 0$ and $c_{k+1} > 0$ and the $c_i$ are strictly increasing.   
Since $\{b_i\}_{i=1}^n$ is non-negative and non-decreasing we have 
\begin{align*}
\underset{i=1}{\overset{n}{\sum}} c_i b_i &= \underset{i=1}{\overset{k}{\sum}} c_i b_i + \underset{i=k+1}{\overset{n}{\sum}} c_i b_i
\ge \underset{i=1}{\overset{k}{\sum}} c_i b_k + \underset{i=k+1}{\overset{n}{\sum}} c_i b_k
= b_k \underset{i=1}{\overset{n}{\sum}} c_i
= 0.
\end{align*}
Thus, we have $\underset{i=1}{\overset{n}{\sum}} a_i b_i = \underset{i=1}{\overset{n}{\sum}} \bar{a} b_i + \underset{i=1}{\overset{n}{\sum}} (a_i - \bar{a})b_i \ge  \underset{i=1}{\overset{n}{\sum}} \bar{a} b_i.$
\end{proof}

\begin{proof}(of Theorem \ref{metatheorem1} )
Denote by $\mathcal{P}$ the projection operator onto the feasible region $P$, and write $s_j$ for the number of zero entries in the residual $(Ax_j - b)^+$, which correspond to satisfied constraints.  Define $V_j := \max\{m-s_j, m-\beta+1\}$. 
Recalling that the method defines $x_{j+1} = x_j - \lambda(A_{\tau_j}x_j - b_{\tau_j})_{i^*}^+ a_{i^*}$ where $$i^* = \underset{i \in \tau_j}{\text{argmax}} \{a_i^T x_j - b_i, 0\} = \underset{i \in \tau_j}{\text{argmax}} (A_{\tau_j}x_j - b_{\tau_j})_i^+,$$ we have 
\begin{align*}
d(x_{j+1},P)^2 &= \|x_{j+1} - \mathcal{P}(x_{j+1})\|^2
\le \|x_{j+1} - \mathcal{P}(x_j)\|^2 
\\&= \|x_j - \lambda(A_{\tau_j}x_j-b_{\tau_j})_{i^*}^+a_{i^*} - \mathcal{P}(x_j)\|^2 
\\&= \|x_j - \mathcal{P}(x_j)\|^2 + \lambda^2((A_{\tau_j}x_j - b_{\tau_j})^+_{i^*})^2 \|a_{i^*}\|^2 
\\&\hspace{2cm}- 2\lambda(A_{\tau_j} x_j - b_{\tau_j})_{i^*}^+ a_{i^*}^T(x_j - \mathcal{P}(x_j)).
\end{align*}
Since $a_{i^*}^T(x_j - \mathcal{P}(x_j)) \ge a_{i^*}^T x_j - b_{i^*}$, we have that
\begin{align}\label{resid}
d(x_{j+1},P)^2 &\le d(x_j, P)^2 + \lambda^2((A_{\tau_j}x_j - b_{\tau_j})_{i^*}^+)^2 \|a_{i^*}\|^2 
\\&\hspace{2cm}- 2\lambda(A_{\tau_j} x_j - b_{\tau_j})_{i^*}^+ (a_{i^*}^T x_j - b_{i^*})\notag\\
&= d(x_j,P)^2 - (2\lambda - \lambda^2)((A_{\tau_j}x_j - b_{\tau_j})_{i^*}^+)^2\notag
\\&= d(x_j,P)^2 - (2\lambda - \lambda^2)\|(A_{\tau_j}x_j - b_{\tau_j})^+\|_\infty^2.
\end{align}
Now, we take advantage of the fact that, if we consider the size of the entries of $(Ax_j - b)^+$, we can determine the precise probability that a particular entry of the residual vector is selected.  Let $(Ax_j - b)^+_{i_k}$ denote the $(k+\beta)$th smallest entry of the residual vector (i.e., if we order the entries of $(Ax_j - b)^+$ from smallest to largest, we denote by $(Ax_j - b)^+_{i_k}$ the entry in the $(k+\beta)$th position).  Each sample has equal probability of being selected, ${m \choose \beta}^{-1}$.  However, the frequency that each entry of the residual vector will be expected to be selected (in Step 3 of SKM) depends on its size.  The $\beta$th smallest entry will be selected from only one sample, while the $m$-th smallest entry (i.e., the largest entry) will be selected from all samples in which it appears.  Each entry is selected according to the number of samples in which it appears and is largest.  Thus, if we take expectation of both sides (with respect to the probabilistic choice of sample, $\tau_j$, of size $\beta$), then  
\begin{align}
\mathbb{E}[\|(A_{\tau_j} x_j - b_{\tau_j})^+\|_\infty^2] &= \frac{1}{{m \choose \beta}} \underset{k=0}{\overset{m-\beta}{\sum}} {\beta - 1 +k \choose \beta-1} ((Ax_j - b)_{i_k}^+)^2 \label{eq1}
\\&\ge \frac{1}{{m \choose \beta}} \underset{k=0}{\overset{m-\beta}{\sum}} \frac{\underset{\ell=0}{\overset{m-\beta}{\sum}} {\beta-1+\ell \choose \beta-1}}{m-\beta+1} ((Ax_j - b)_{i_k}^+)^2 \label{eq2}
\\&=   \underset{k=0}{\overset{m-\beta}{\sum}} \frac{{1}}{m-\beta+1} ((Ax_j - b)_{i_k}^+)^2 \label{eq3}
\\&\ge \frac{1}{m-\beta+1} \min\bigg\{\frac{m-\beta+1}{m-s_j},1\bigg\}\|(Ax_j - b)^+\|_2^2,\label{eq4} 
\end{align}
where \eqref{eq2} follows from Lemma \ref{summationaverage}, because $\{{\beta - 1 + k \choose \beta - 1}\}_{k=0}^{m -\beta}$  is strictly increasing and $\{(Ax_j - b)^+_{i_k}\}_{k=0}^{m-\beta}$ is non-decreasing.  Equality \eqref{eq3} follows from \eqref{eq2} due to the fact that $\underset{\ell=0}{\overset{m-\beta}{\sum}} {\beta-1+\ell \choose \beta-1} = {m \choose \beta}$ which is known as the column-sum property of Pascal's triangle, among other names. Inequality \eqref{eq4} follows from the fact that the ordered summation in \eqref{eq3} is at least $\frac{m-\beta + 1}{m-s_j}$ of the norm of the residual vector (since $s_j$ of the entries are zero) or is the entire residual vector provided $s_j \ge \beta - 1$.
\\\\Thus, we have 
\begin{align*}
\mathbb{E}[d(x_{j+1},P)^2] &\le d(x_j,P)^2 - (2\lambda-\lambda^2)\mathbb{E}[\|(A_{\tau_j} x_j - b_{\tau_j})^+\|_\infty^2]
\\&\le d(x_j,P)^2 - \frac{2\lambda - \lambda^2}{V_{j}}\|(Ax_j - b)^+\|_2^2 
\le \bigg(1 - \frac{2\lambda - \lambda^2}{V_{j} L_2^2} \bigg) d(x_j,P)^2.
\end{align*}
Since $V_{j} \le m$ in each iteration,
$$\mathbb{E}[d(x_{j+1},P)^2] \le \bigg(1- \frac{2\lambda - \lambda^2}{mL_2^2}\bigg) d(x_j,P)^2.$$
Thus, inductively, we get that $$\mathbb{E}[d(x_k,P)^2] \le \bigg(1 - \frac{2\lambda - \lambda^2}{m L_2^2}\bigg)^k d(x_0, P)^2.$$
\end{proof}

Now, we have that the SKM methods will perform at least as well as the Randomized Kaczmarz method in expectation; however, if we know that after a certain point the iterates satisfy some of the constraints, we can improve our expected convergence rate guarantee.  Clearly, after the first iteration, if $\lambda \ge 1$, in every iteration at least one of the constraints will be satisfied so we can guarantee a very slightly increased expected convergence rate.  However, we can say more based on the geometry of the problem.  

\begin{lemma}\label{pointwisecloser}
The sequence of iterates, $\{x_k\}$ generated by an SKM method are pointwise closer to the feasible polyhedron $P$.  That is, for all $a \in P$, $\|x_k - a\| \le \|x_{k-1} - a\|$ for all iterations $k$.
\end{lemma}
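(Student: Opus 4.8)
The plan is to prove the inequality one iteration at a time and then chain the steps, since the claim for all $k$ follows immediately once I show $\|x_k - a\| \le \|x_{k-1} - a\|$ for a single update and an arbitrary fixed $a \in P$. So fix $a \in P$ and a single step, and write $t := t_k$ for the index selected in Steps 2--3 and $\gamma := (a_t^T x_{k-1} - b_t)^+/\|a_t\|^2$, so that the update reads $x_k = x_{k-1} - \lambda \gamma\, a_t$. The first step is to expand the squared norm directly: since the move is along the single direction $a_t$, I get
$$\|x_k - a\|^2 = \|x_{k-1} - a\|^2 - 2\lambda \gamma\, a_t^T(x_{k-1} - a) + \lambda^2 \gamma^2 \|a_t\|^2.$$
This is the same bookkeeping that produces \eqref{resid} in the proof of Theorem~\ref{metatheorem1}, except that here $a$ is an arbitrary feasible point rather than the projection $\mathcal{P}(x_{k-1})$.

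The second step is a case split on the selected constraint. If $a_t^T x_{k-1} - b_t \le 0$, then $\gamma = 0$, the update is trivial ($x_k = x_{k-1}$), and there is nothing to prove. Otherwise $\gamma > 0$ and $\gamma \|a_t\|^2 = a_t^T x_{k-1} - b_t > 0$. The one essential observation is exactly the feasibility fact already invoked in the proof of Theorem~\ref{metatheorem1}: because $a \in P$ we have $a_t^T a \le b_t$, and therefore
$$a_t^T(x_{k-1} - a) \;=\; a_t^T x_{k-1} - a_t^T a \;\ge\; a_t^T x_{k-1} - b_t \;>\; 0.$$

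The third step is to combine these. Factoring $\lambda \gamma > 0$ out of the two trailing terms of the expansion and substituting $\gamma \|a_t\|^2 = a_t^T x_{k-1} - b_t$ together with the inequality above gives
$$\|x_k - a\|^2 - \|x_{k-1} - a\|^2 = \lambda \gamma\Big[\lambda\big(a_t^T x_{k-1} - b_t\big) - 2\,a_t^T(x_{k-1}-a)\Big] \le \lambda \gamma\,(\lambda - 2)\big(a_t^T x_{k-1} - b_t\big) \le 0,$$
where the final inequality uses $\lambda \le 2$ and $a_t^T x_{k-1} - b_t > 0$. This settles the single-step bound, and since it holds for every $k$ and every $a \in P$, induction on $k$ completes the proof.

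I do not expect a genuine obstacle here; the argument is a short direct computation. The only point requiring care is the case distinction on whether the selected constraint $t_k$ is actually violated (so that $\gamma$ may vanish), and the recognition that the hypothesis $\lambda \in (0,2]$ is precisely what forces the quadratic increment to be nonpositive. It is worth emphasizing in the writeup that this monotonicity is \emph{pointwise} in $a$ and does not rely on the Hoffman constants or on any bound on the residual, which is what distinguishes it from the expected-decrease statement of Theorem~\ref{metatheorem1}.
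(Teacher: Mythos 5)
Your proof is correct and rests on the same key fact as the paper's: since $a \in P \subset H_{t_k} := \{x : a_{t_k}^T x \le b_{t_k}\}$, the selected half-space contains $a$, and a relaxed projection toward that half-space with $\lambda \in (0,2]$ cannot increase the distance to $a$. The paper states this as a one-line geometric observation, whereas you verify it by the explicit squared-norm expansion, which has the minor merit of making visible exactly where the hypothesis $\lambda \le 2$ enters.
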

\begin{proof}
For $a \in P$, $\|x_k - a\| \le \|x_{k-1} - a\| \text{ for all } k$ since $a \in P \subset H_{t_k} := \{x : a_{t_k}^Tx \le b_{t_k}\}$ and $x_k$ is the projection of $x_{k-1}$ towards or into the half-space $H_{t_k}$ (provided $x_{k-1} \not\in H_{t_k}$, in which case the inequality is true with equality).
\end{proof}

\begin{figure}[h]
\newcommand{\MyPath}{(-2,13/7)--(9/11,16/11)--(-1.32558, -1.04651)--(-2., -0.625)}
\begin{tikzpicture}[scale=1.3]
\clip (-2.7,-1.5) rectangle (4.5,3.8);
\fill[color=gray] \MyPath -- cycle;
\draw (-2,4)--(-2,-3);
\draw (-3,2)--(4,1);
\draw (3,4)--(-3,-3);
\draw (-3,0)--(1,-2.5);
\node at (-1.4,0.3) {$P$};
\node at (-0.4,1) {$a$};
\draw[fill] (-0.2,1.2) circle [radius=0.07];
\draw[fill] (9/11,16/11) circle [radius=0.07];
\node at (1.3,1.15) {$l \in X$};
\draw[fill] (4.25,2) circle [radius=0.07];
\draw[fill] (2.8,3.2) circle [radius=0.07];
\draw[dashed] (4.25,2)--(2.8,3.2);
\draw[fill] (2.5,1.5) circle [radius=0.07];
\draw[dashed] (2.8,3.2)--(2.5,1.5);
\draw[fill] (1.7,2.15) circle [radius=0.07];
\draw[dashed] (2.5,1.5)--(1.7,2.15);
\draw[fill] (1.58,1.5) circle [radius=0.07];
\draw[dashed] (1.7,2.15)--(1.58,1.5);
\draw[dotted] (-0.2,1.2) circle [radius=1.0495];
\draw[dotted] (-0.2,1.2)--(9/11,16/11);
\node at (0.3,1.34) {$r_a$};
\node at (-1.4, -0.75) {$a'$};
\draw[fill] (-1.4, -0.5) circle [radius=0.07];
\draw[dotted] (-1.4,-0.5) circle [radius=2.9564];
\draw[dotted] (-1.4,-0.5)--(9/11,16/11);
\node at (-0.3,0.4) {$r_{a'}$};
\node at (0.8,2.1) {$S(a)$};
\node at (1.15,0) {$S(a')$};
\end{tikzpicture}
\hspace{0.5cm}
\begin{tikzpicture}[scale=1]
\draw (-1,4) -- (0,-3);
\draw[fill] (-1/7,-2) circle [radius=0.07];
\node at (0.45,-2) {$a \in P$};
\draw[fill] (-1.5,3) circle [radius=0.07];
\node at (0.35,3.18) {$l \in X$};
\node at (-1.7,3) {$l'$};
\node at (-1.3,1.5) {$r_a$};
\node at (0,1.68) {$r_a$};
\draw[fill] (-0.25,3.18) circle [radius=0.07];
\draw[dotted] (-1.5,3)--(-1/7,-2);
\draw[dotted] (-0.25,3.18)--(-1/7,-2);
\node at (0.2, -2.8) {$\pi$};
\end{tikzpicture}
\caption{Left: image of $a \in P$, $r_a$ and $S(a)$ and $l \in \underset{a \in P}{\cap} S(a)$ as defined in Lemma \ref{pointconverge}.  Right: image of $l, l' \in X$ contradicting the full-dimensionality of $P$.}
\end{figure}
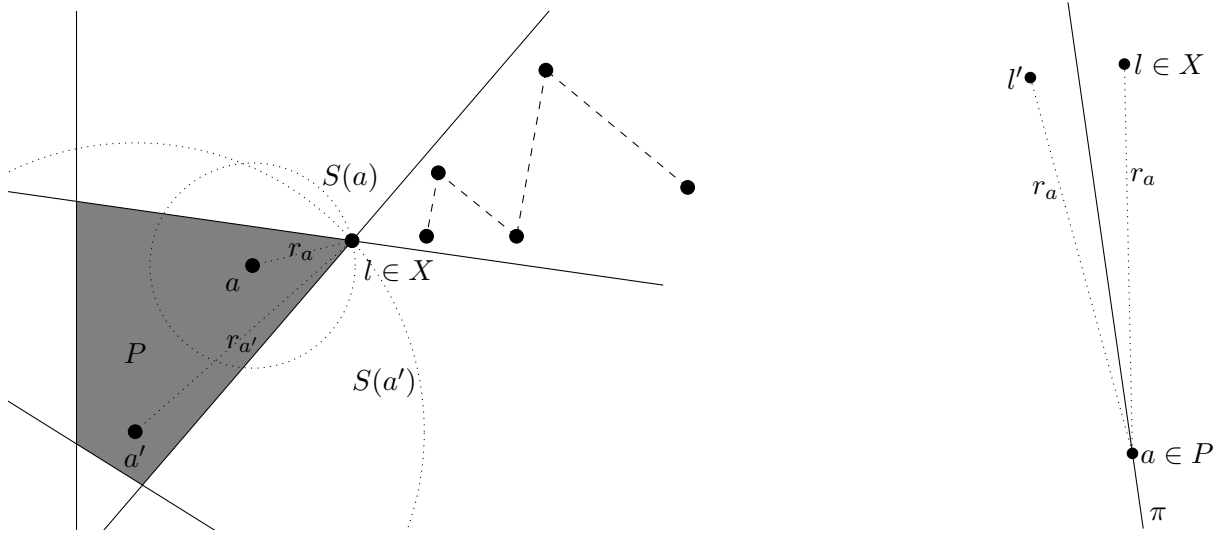

\begin{lemma}\label{pointconverge}
If $P$ is $n$-dimensional (full-dimensional) then the sequence of iterates $\{x_k\}$ generated by an SKM method converge to a point $l \in P$.
\end{lemma}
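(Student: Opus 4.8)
The plan is to combine the pointwise monotonicity of Lemma \ref{pointwisecloser} with the full-dimensionality hypothesis to pin down a unique accumulation point of $\{x_k\}$. First I would fix any $a \in P$. By Lemma \ref{pointwisecloser} the sequence $\|x_k - a\|$ is non-increasing and bounded below by $0$, hence it converges to some limit $r_a \ge 0$. In particular $\{x_k\}$ is bounded, since it lies in the closed ball of radius $\|x_0 - a\|$ about $a$, so by Bolzano--Weierstrass its set of accumulation points $X$ is nonempty and compact. The goal is then to show $X$ is a singleton whose single element lies in $P$.

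The heart of the argument is showing $|X| = 1$, and this is where full-dimensionality enters. If $l \in X$ is any accumulation point, say $x_{k_j} \to l$, then $\|l - a\| = \lim_j \|x_{k_j} - a\| = r_a$ for every $a \in P$; that is, each accumulation point lies on the sphere $S(a) := \{x : \|x - a\| = r_a\}$, so $X \subseteq \bigcap_{a \in P} S(a)$. Suppose for contradiction that $X$ contains two distinct points $l \neq l'$. Then $\|l - a\| = \|l' - a\| = r_a$ for all $a \in P$, and expanding the squared norms cancels the $\|a\|^2$ terms and yields $\langle l - l', a\rangle = \tfrac12(\|l\|^2 - \|l'\|^2)$ for every $a \in P$. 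Since $l - l' \neq 0$, this says $P$ is contained in the hyperplane $\pi := \{x : \langle l-l', x\rangle = \tfrac12(\|l\|^2-\|l'\|^2)\}$, which is an affine subspace of dimension $n-1$, contradicting the assumption that $P$ is $n$-dimensional. Hence $X = \{l\}$, and a bounded sequence with a unique accumulation point converges: $x_k \to l$.

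Finally I would verify $l \in P$. From inequality \eqref{resid} the quantity $d(x_k,P)^2$ is non-increasing along every realization (using $2\lambda - \lambda^2 = \lambda(2-\lambda) \ge 0$ for $\lambda \in (0,2]$), while Theorem \ref{metatheorem1} gives $\mathbb{E}[d(x_k,P)^2] \to 0$. A non-negative non-increasing sequence whose expectation tends to $0$ must itself tend to $0$ (its almost-sure limit has expectation $0$, hence is $0$), so $d(x_k,P) \to 0$. Since $P$ is closed and $x_k \to l$, continuity of $d(\cdot, P)$ forces $d(l,P) = 0$, i.e. $l \in P$.

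The main obstacle is the middle step. Monotonicity alone only confines the accumulation points to a common sphere about every $a \in P$, and such an intersection of spheres can be a large set in low dimensions; it is exactly the perpendicular-bisector computation, combined with full-dimensionality, that rules out two distinct limits by forcing all of $P$ into a single hyperplane. I expect the only delicate point beyond this to be bookkeeping the ``almost surely'' qualifier throughout, since convergence of $d(x_k,P)$ to $0$ is inherited from the expectation bound of Theorem \ref{metatheorem1} rather than holding deterministically.
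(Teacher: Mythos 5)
Your proof is correct and follows essentially the same route as the paper's: boundedness of $\{x_k\}$ via Lemma \ref{pointwisecloser}, accumulation points confined to the spheres $S(a)$, and the perpendicular-bisector argument showing that two distinct accumulation points would force $P$ into a hyperplane, contradicting full-dimensionality. The remaining differences are cosmetic: you conclude $x_k \to l$ from the standard fact that a bounded sequence with a unique accumulation point converges (rather than the paper's monotonicity contradiction using $l \in P$), and you justify the almost-sure convergence $d(x_k,P) \to 0$ more carefully than the paper, which simply asserts it.
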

\begin{proof}
Let $a \in P$.  Note that the limit, $\lim_{k \rightarrow \infty} \|x_k - a\| =: r_a$ exists since $\{\|x_k - a\|\}$ is bounded and decreasing (with probability 1).  Define 
$$S(a) := \{x: \|x-a\| = r_a\} \text{ and } X := \underset{a \in P}{\cap} S(a).$$ 
Note that $X$ is not empty since the bounded sequence $\{x_k\}$ must have a limit point, $l$, achieving $\|l-a\| = r_a$.  
Moreover, suppose there were two such points, $l, l' \in X$.  Define $\pi := \{x : \|l-x\| = \|l' - x\|\}$ to be the hyperplane of points equidistance between $l, l'$.  Then for $a \in P$, we have $l, l' \in S(a)$.  Hence, $a \in \pi$ and we have that $P \subset \pi$, which contradicts the full dimensionality of $P$. Thus $X$ contains only one point, $l$, and it must be a limit point of $\{x_k\}$.  Now, since $\{x_k\}$ is converging to $P$ (with probability one), we must have that $l \in P$.

Now, suppose that $x_k \not\rightarrow l$ (i.e. only a subsequence of $\{x_k\}$ converges to $l$).  Thus, there exists an $\epsilon > 0$ so that for all $K$ there exists $k \ge K$ with $\|x_k - l\| > \epsilon$.  However, there exists a subsequence of $\{x_k\}$ which is converging to $l$, so there must exist some $K_1$ with $\|x_{K_1} - l\| < \epsilon$.  Thus, at some point the sequence $\|x_k - l\|$ must increase, which contradicts Lemma \ref{pointwisecloser}.  Hence, $x_k \rightarrow l$.
\end{proof}

\begin{lemma}\label{violatedconstraints}
Let $l$ be the limit point of the $\{x_k\}$.  There exists an index $K$ so that if $a_j^Tl < b_j$  then $a_j^Tx_k \le b_j$ for all $k \ge K$.
\end{lemma}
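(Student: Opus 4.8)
The plan is to exploit the strict slack in each constraint that is satisfied at the limit point $l$, combined with the convergence $x_k \to l$ already established in Lemma \ref{pointconverge}. First I would restrict attention to the finite index set $J := \{j : a_j^T l < b_j\}$ of constraints strictly satisfied at $l$; if $J$ is empty the statement is vacuous, so assume otherwise. For each $j \in J$ the gap $b_j - a_j^T l$ is strictly positive, and since $J$ is finite the minimum $\delta := \min_{j \in J}(b_j - a_j^T l)$ is attained and strictly positive. This single quantity $\delta$ is what will let one threshold serve all of the relevant constraints at once.

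Next I would bring in the convergence of the iterates. Because $x_k \to l$, I can choose an index $K$ large enough that $\|x_k - l\| < \delta / \max_j \|a_j\|$ for every $k \ge K$ (under the normalization $\|a_j\| = 1$ this is just $\|x_k - l\| < \delta$). Then for any $j \in J$ and any $k \ge K$, I would write $a_j^T x_k - b_j = a_j^T(x_k - l) - (b_j - a_j^T l)$ and bound the first term by Cauchy--Schwarz, obtaining $a_j^T x_k - b_j \le \|a_j\|\,\|x_k - l\| - \delta < 0$, whence $a_j^T x_k \le b_j$ as claimed. The argument needs nothing beyond the triangle/Cauchy--Schwarz inequality and the definition of a limit, so it should be short and entirely elementary.

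The one point I would be most careful about is the uniformity of $K$: the statement demands a single index handling every $j$ with $a_j^T l < b_j$ simultaneously, so one must not pick $K$ separately per constraint. This is precisely what is secured by passing to the uniform gap $\delta$ over the finite set $J$ before invoking convergence. I do not anticipate any genuine obstacle here; the finiteness of the constraint system makes the minimum-gap device available, and the rest is a direct estimate.
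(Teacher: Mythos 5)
Your argument is correct and is exactly the standard elaboration of the paper's own proof, which simply states that the claim ``is obvious from $x_k \rightarrow l$.'' Your use of the uniform minimum gap $\delta$ over the finite set $J$, together with Cauchy--Schwarz, is precisely the detail the paper leaves implicit, so there is nothing to add or correct.
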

\begin{proof}
This is obvious from $x_k \rightarrow l$.
\end{proof}

We would like to conclude with a small ``qualitative'' proposition that indicates there are two stages of behavior of the SKM algorithms. 
After the $K$-th iteration the point is converging to a particular face of the polyhedron. At that
moment one has essentially reduced the calculation to an equality system problem, because the inequalities that define the face 
of convergence need to be met with equality in order to reach the polyhedron.

\begin{proposition} \label{improvedrate}
If the feasible region $P$ is generic and nonempty (i.e., full-dimensional and every vertex satisfies exactly $n$ constraints with equality), then an SKM method with samples of size $\beta \le m - n$ will converge to a single face $F$ of $P$ and all but the constraints defining $F$ will eventually be satisfied.  Thus, the method is guaranteed an increased convergence rate after some index $K$;  for $k \ge K$
$$\mathbb{E}[d(x_k,P)^2] \le \bigg(1 - \frac{2\lambda - \lambda^2}{m L_2^2}\bigg)^K \bigg(1 - \frac{2\lambda - \lambda^2}{(m - \beta + 1) L_2^2}\bigg)^{k-K} d(x_0, P)^2.$$
\end{proposition}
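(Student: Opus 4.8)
The plan is to combine the geometric convergence-to-a-point result (Lemma \ref{pointconverge}) with the sharper one-step rate of Theorem \ref{metatheorem1}, whose denominator $V_{j}$ improves once enough constraints become permanently satisfied. First I would invoke full-dimensionality and Lemma \ref{pointconverge} to conclude that the iterates converge to a single point $l \in P$, which lies in the relative interior of a unique minimal face $F$ of $P$. The genericity hypothesis (every vertex meets exactly $n$ facets, i.e. $P$ is simple) is used precisely to bound the number of constraints tight at $l$: a point in the relative interior of a $d$-dimensional face of a simple polytope lies on exactly $n - d$ facets, so at most $n$ constraints satisfy $a_j^T l = b_j$, and hence at least $m - n$ constraints are strictly satisfied, $a_j^T l < b_j$.

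Next I would apply Lemma \ref{violatedconstraints} to produce an index $K$ after which every constraint that is strict at $l$ is also satisfied by $x_k$; consequently $s_k \ge m - n$, equivalently $m - s_k \le n$, for all $k \ge K$. The arithmetic payoff comes from the sample-size restriction $\beta \le m - n$: it forces $m - \beta + 1 \ge n + 1 > n \ge m - s_k$, so that the definition $V_{k} = \max\{m - s_k,\, m - \beta + 1\}$ collapses to $V_k = m - \beta + 1$ for every $k \ge K$. Feeding this into the one-step bound of Theorem \ref{metatheorem1} replaces the factor $1 - (2\lambda - \lambda^2)/(m L_2^2)$ by the strictly smaller $1 - (2\lambda - \lambda^2)/((m-\beta+1) L_2^2)$ in all iterations past $K$.

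To assemble the final estimate I would split the product of one-step contractions at the index $K$. For the iterations producing $x_1,\dots,x_K$ I use the universal bound $V_{j-1} \le m$, contributing the factor $(1 - (2\lambda - \lambda^2)/(m L_2^2))^{K}$; for the iterations producing $x_{K+1},\dots,x_k$ the denominator is exactly $m-\beta+1$ by the computation above, contributing $(1 - (2\lambda - \lambda^2)/((m-\beta+1) L_2^2))^{k-K}$. Chaining the conditional one-step bounds through the tower property of conditional expectation then yields exactly the stated inequality, the two exponents summing to $k$.

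The main obstacle is that $K$ is not deterministic: both the limit point $l$ and the threshold index supplied by Lemma \ref{violatedconstraints} depend on the realized sequence of random samples, so a clean product over a \emph{fixed} $K$ is really a statement conditioned on the trajectory (equivalently, on the probability-one event that $\{x_k\}$ converges to $l$). Making the chaining fully rigorous requires either conditioning on $K$ and the limiting face before taking a further expectation, or treating $K$ as an almost-surely finite random crossover time and bounding the one-step contraction factors pathwise. Since this proposition is offered as a qualitative description of the two-stage behavior of SKM, I would carry out the argument at the level of a fixed (conditioned) $K$ and explicitly flag that the genuine subtlety is the randomness of this crossover time.
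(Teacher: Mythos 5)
Your proposal follows essentially the same route as the paper's proof: Lemma \ref{pointconverge} gives convergence to a single point $l$, genericity bounds the number of constraints tight at $l$ by $n$, Lemma \ref{violatedconstraints} supplies the index $K$ after which at least $m-n$ constraints are satisfied, and the hypothesis $\beta \le m-n$ forces $V_k = m-\beta+1$ in Theorem \ref{metatheorem1} thereafter. Your explicit flagging of the randomness of $K$ (and the need to condition on the trajectory) is a genuine subtlety that the paper's terse argument silently glosses over, so your treatment is, if anything, more careful than the published one.
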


\begin{proof} (of Proposition \ref{improvedrate}))
Since a generic polyhedron is full-dimensional, by Lemma \ref{pointconverge}, we have that the SKM method iterates converge to a point on the boundary of $P$, $l$.  Now, since this $l$ lies on a face of $P$ and $P$ is generic, this face is defined by at most $n$ constraints.  By Lemma \ref{violatedconstraints}, there exists $K$ so that for $k \ge K$ at least $m-n$ of the constraints have been satisfied.  Thus, our proposition follows from Theorem \ref{metatheorem1}.
\end{proof}


\subsection{Proof of Theorem \ref{metatheorem2} }

Now, we show that the general SKM method (when $\lambda \not= 2$) on rational data is finite in expectation.  

We will additionally make use of the following lemma (which is key in demonstrating that Khachian's ellipsoidal algorithm is finite and polynomial-time \cite{khachiyan}) in our proof: 

\begin{lemma}\label{lem:feasible} If the rational system $A x \le b$ is feasible, then there is a feasible solution $\hat{x}$ whose coordinates satisfy $|\hat{x}_j| \le \frac{2^\sigma}{2n}$ for $j = 1,...,n.$
\end{lemma}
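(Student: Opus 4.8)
The plan is to exhibit an explicit feasible point of small size by reducing to a square nonsingular subsystem and then invoking Cramer's rule together with a Hadamard-type determinant bound. First I would exploit the polyhedral structure of $P = \{x : Ax \le b\} \neq \emptyset$. Every nonempty rational polyhedron has an inclusion-minimal face $F$, which is an affine subspace of the form $F = \{x : A_I x = b_I\}$, where $I$ indexes the constraints that hold as implicit equalities on $F$. Picking any $\hat{x} \in F$ and setting $r = \text{rank}(A_I)$, I would augment a maximal set of $r$ linearly independent active rows by $n - r$ coordinate equations $x_p = 0$, choosing the fixed coordinates so that the combined matrix is nonsingular (this is possible since the projection onto some $(n-r)$-subset of coordinates restricts to an isomorphism on $\ker A_I$). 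This produces a square invertible matrix $B$, each of whose rows is either a row of $A$ or a standard basis vector, together with a right-hand side $c$ made of entries of $b$ and zeros, such that $\hat{x}$ is the unique solution of $Bx = c$ and $\hat{x} \in F \subseteq P$.

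Next I would apply Cramer's rule, $\hat{x}_j = \det(B_j)/\det(B)$, where $B_j$ is $B$ with its $j$th column replaced by $c$. After reducing to integral data (clearing denominators, which the encoding length $\sigma$ is set up to absorb), $\det(B)$ is a nonzero integer, so $|\det(B)| \ge 1$ and it suffices to bound the numerator. By Hadamard's inequality, $|\det(B_j)|$ is at most the product of the Euclidean norms of its rows; the rows coming from coordinate equations contribute a factor $1$ (for the nontrivial coordinates $j$, these rows are unaffected by the column replacement), while each row coming from constraint $i$ satisfies, via $\|v\|_2 \le \|v\|_1 < \prod_\ell(|v_\ell|+1)$, a norm bound of $\prod_\ell(|a_{i\ell}|+1)\,(|b_i|+1)$. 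Since the constraint rows of $B$ form a subset of the rows of $A$, taking the product over them is dominated by the product over all of $A$, giving
\[
|\det(B_j)| \;\le\; \prod_{i}\prod_{\ell}\bigl(|a_{i\ell}|+1\bigr)\cdot\prod_i\bigl(|b_i|+1\bigr) \;=\; 2^{\,\sum_i\sum_\ell \log(|a_{i\ell}|+1) \,+\, \sum_i \log(|b_i|+1)}.
\]

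Finally I would reconcile this estimate with the definition $\sigma = \sum_i\sum_\ell \log(|a_{i\ell}|+1) + \sum_i \log(|b_i|+1) + \log(nm) + 2$. The exponent above equals $\sigma - \log(nm) - 2 = \sigma - \log(4nm)$, so $|\hat{x}_j| \le |\det(B_j)| \le 2^{\sigma}/(4nm) \le 2^\sigma/(2n)$, which is the claimed bound, in fact with room to spare. The main obstacle is the first step rather than the arithmetic: one must guarantee the existence of a feasible point solving an $n\times n$ nonsingular subsystem even when $P$ is not pointed (e.g.\ contains lines), which is exactly what the minimal-face argument together with the augmentation by coordinate equations accomplishes. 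Once that structure is in place, matching the constant in $\tfrac{2^\sigma}{2n}$ is a careful but routine application of Hadamard's inequality tuned to the chosen encoding length.
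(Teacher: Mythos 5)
Your proof is essentially correct, but be aware that there is no internal proof to compare it against: the paper does not prove Lemma~\ref{lem:feasible} at all. Like Lemma~\ref{lem:infeasible}, it is imported as a known ingredient of Khachiyan's analysis of the ellipsoid method \cite{khachiyan}. What you have reconstructed is the standard argument from that literature, and its steps check out: a nonempty polyhedron has an inclusion-minimal face $F=\{x: A_I x = b_I\}$; a maximal independent set of $r$ rows of $A_I$ can be completed to a nonsingular square matrix $B$ by unit rows $e_p$, $p\in S$, where $S$ is chosen so that the coordinate projection $\pi_S$ is injective on $\ker A_I$ (take the indices of $n-r$ linearly independent rows of a kernel basis matrix); the unique solution $\hat{x}$ of $Bx=c$ then satisfies all of $A_I x = b_I$ by consistency, so $\hat{x}\in F\subseteq P$; and Cramer's rule combined with Hadamard's inequality and the estimate $\|v\|_2\le\|v\|_1<\prod_\ell(|v_\ell|+1)$ gives $|\hat{x}_j|\le 2^{\sigma-\log(nm)-2}=2^\sigma/(4nm)\le 2^\sigma/(2n)$. (For $j\in S$ the bound is trivial since $\hat{x}_j=0$; your parenthetical about unit rows being unaffected by the column replacement covers the only nontrivial case $j\notin S$.)

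The one step I would push back on is the integrality reduction. The inequality $|\det(B)|\ge 1$ requires integer entries, and your aside that clearing denominators is ``absorbed'' by the encoding length is not literally correct: rescaling rows to integrality changes the entries and hence the value of $\sigma$ computed from them, and for genuinely rational data the lemma as stated can simply fail. For instance, with $n=m=1$ the single inequality $-\epsilon x\le -1$ with small rational $\epsilon>0$ has $\sigma=\log(1+\epsilon)+3$, so $2^\sigma/(2n)\approx 4$, yet every feasible point satisfies $x\ge 1/\epsilon$, which exceeds $4(1+\epsilon)$ once $\epsilon<1/5$. The resolution is that the lemma, with $\sigma$ given by the paper's formula, is a statement about integer data --- this is Khachiyan's setting, and the paper inherits the imprecision by writing ``rational.'' So state and run your argument for integer $A,b$ and it is complete; no denominator-clearing step is needed, nor is one available.
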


Using the bound on the expected distance to the solution polyhedron, $P$, we can show a bound on the expected number of iterations needed to detect feasibility (which does not depend on the size of block selected).

\begin{proof}(of Theorem \ref{metatheorem2})  First, note that if $\tilde{P} := \{x | \tilde{A} x \le \tilde{b}\}$, then $P = \tilde{P}$.  Then, by Lemma \ref{lem:feasible}, if $\tilde{A}x \le \tilde{b}$ is feasible (so $Ax \le b$ is feasible) then there is a feasible solution $\hat{x}$ with $|\hat{x}_j| < \frac{2^\sigma}{2n}$ for all $j = 1, 2, ..., n$ (here $\sigma$ is the binary encoding length for the unnormalized $A, b$).  Thus, since $x_0 = 0$, 

$$d(x_0, P) = d(x_0, \tilde{P}) \le ||\hat{x}|| \le \frac{2^{\sigma-1}}{n^{1/2}}.$$

Now, define $\tilde{\theta}(x)$ to be the maximum violation for the new, normalized system $\tilde{A}x \le \tilde{b}$, $$\tilde{\theta}(x) := \max \{ 0, \underset{i \in [m]}{\max}\; \tilde{a}_i^T x - \tilde{b}_i\} = \max \bigg\{ 0, \underset{i \in [m]}{\max}\; \frac{a_i^T x - b_i}{||a_i||}\bigg\}.$$  By Lemma \ref{lem:infeasible}, if the system $\tilde{A} x \le \tilde{b}$ is infeasible (so $A x \le b$ is infeasible), then $$\tilde{\theta}(x) = \max \{ 0, \underset{i \in [m]}{\max}\; \frac{a_i^T x - b_i}{||a_i||}\} \ge \frac{\max \{ 0, \underset{i \in [m]}{\max}\; a_i^T x - b_i\}}{\underset{j \in [m]}{\max} ||a_j||} = \frac{\theta(x)}{\underset{j \in [m]}{\max} ||a_j||} \ge \frac{2^{1-\sigma}}{\underset{j \in [m]}{\max} ||a_j||}.$$

When running SKM on $\tilde{A}x \le \tilde{b}$, we can conclude that the system is feasible when $\tilde{\theta}(x) < \frac{2^{1-\sigma}}{\underset{j \in [m]}{\max} \; ||a_j||}.$  Now, since  every point of $P$ is inside the half-space defined by $\{x | \tilde{a}_i^Tx \le \tilde{b}_i\}$ for all $i = 1,\cdots, m$,  we have $\tilde{\theta}(x) =  \max \{ 0, \underset{i \in [m]}{\max}\; \tilde{a}_i^T x - \tilde{b}_i\} \le d(x,P).$  Therefore, if $Ax \le b$ is feasible, then

 $$\mathbb{E}(\tilde{\theta}(x_k)) \le \mathbb{E}(d(x_k,P)) \le \bigg(1 - \frac{2\lambda - \lambda^2}{m L_2^2}\bigg)^{k/2} d(x_0, P) \le \bigg(1 - \frac{2\lambda - \lambda^2}{m L_2^2}\bigg)^{k/2}  \frac{2^{\sigma-1}}{n^{1/2}},$$ where the second inequality follows from Theorem \ref{metatheorem1} and the third inequality follows from Lemma \ref{lem:feasible} and the discussion above.

Now, we anticipate to have detected feasibility when $\mathbb{E}(\tilde{\theta}(x_k)) < \frac{2^{1-\sigma}}{\underset{j \in [m]}{\max} \; ||a_j||}$, which is true for $$k > \frac{4\sigma - 4 - \log n + 2 \log \bigg(\underset{j \in [m]}{\max} ||a_j||\bigg)}{\log \bigg(\frac{mL_2^2}{mL_2^2 - 2 \lambda + \lambda^2}\bigg)}.$$  Furthermore, by Markov's inequality (see e.g., \cite[Section 8.2]{sheldon2002first}),
 if the system $Ax \le b$ is feasible, then the probability of not having a certificate of feasibility is bounded: $$\mathbb{P}\bigg(\tilde{\theta}(x_k) \ge \frac{2^{1-\sigma}}{\underset{j \in [m]}{\max}\; ||a_j||}\bigg) \le \frac{\mathbb{E}(\tilde{\theta}(x_k))}{\frac{2^{1-\sigma}}{\underset{j \in [m]}{\max}\; ||a_j||}} < \frac{\bigg(1 - \frac{2\lambda - \lambda^2}{m L_2^2}\bigg)^{k/2}  \frac{2^{\sigma-1}}{n^{1/2}}}{\frac{2^{1-\sigma}}{\underset{j \in [m]}{\max}\; ||a_j||}}$$ $$= \frac{2^{2\sigma-2} \max \; ||a_j||}{n^{1/2}} \bigg(1 - \frac{2\lambda - \lambda^2}{m L_2^2}\bigg)^{k/2}.$$  This completes the proof.
\end{proof}


\section{Experiments}\label{sec:exp}

We implemented the SKM methods in MATLAB \cite{MATLAB:2016} on a 32GB RAM 8-node cluster (although we did not exploit any parallelization), each with 12 cores of Intel Xeon E5-2640 v2 CPUs running at 2 GHz, and ran them on systems while varying the projection parameter, $\lambda$, and the sample size, $\beta$.  We divided our tests into three broad categories: random data, non-random data, and comparisons to other methods.  Our experiments focus on the regime $m\gg n$, since as mentioned earlier, this is the setting in which iterative methods are usually applied; however, we see similar behavior in the underdetermined setting as well.

\subsection{Experiments on random data}\label{sec:randexp}

First we considered systems $Ax \le b$ where $A$ has entries consisting of standard normal random variables and $b$ is chosen to force the system to have a solution set with non-empty interior (we generated a consistent system of equations and then perturbed the right hand side with the absolute value of a standard normal error vector). We additionally considered systems where the rows of $A$ are highly correlated (each row consists only of entries chosen uniformly at random from $[.9,1]$ or only of entries chosen uniformly at random from $[-1, -.9]$) and $b$ is chosen as above.
We vary the size of $A \in \mathbb{R}^{m \times n}$,  which we note in each example presented below. 

In Figure \ref{fig:lambdaexperiments}, we provide experimental evidence that for each problem there is an optimal choice for the sample size, $\beta$, in terms of computation.  We measure the average computational time necessary for SKM with several choices of sample size $\beta$ to reach halting (positive) residual error $2^{-14}$ (i.e. $||(Ax_k-b)^+||_2 \le 2^{-14}$).  Regardless of choice of projection parameter, $\lambda$, we see a minimum for performance occurs for $\beta$ between $1$ and $m$.

\begin{figure}[h]
\begin{center}
\includegraphics[scale=.32]{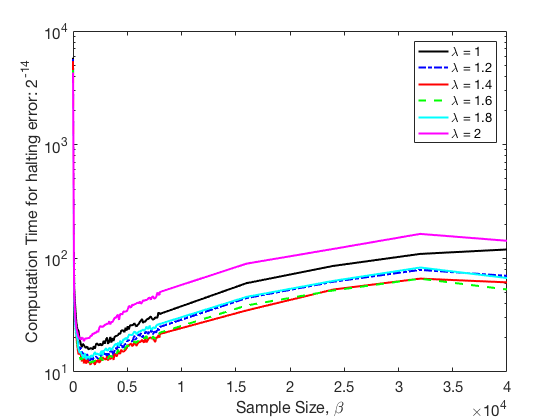}
\includegraphics[scale=.32]{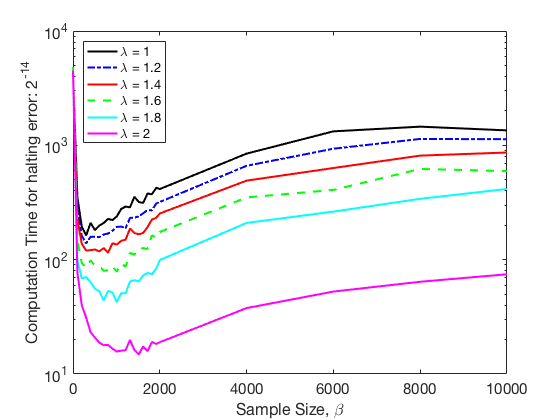}
\end{center}
\caption{Left: Average comp. time for SKM on $40000 \times 100$ Gaussian system to reach residual error $2^{-14}$.  Right: Average comp. time for SKM on $10000 \times 100$ correlated random system to reach residual error.}
\label{fig:lambdaexperiments}
\end{figure}

\begin{figure}
\begin{center}
\includegraphics[scale=.32]{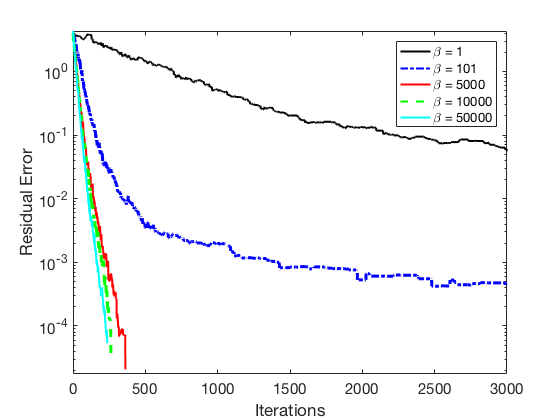}
\includegraphics[scale=.32]{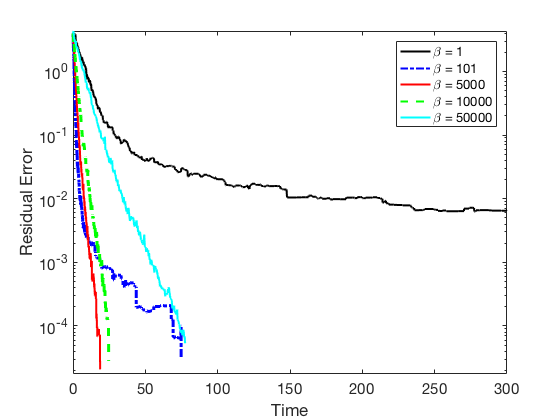}
\end{center}
\caption{Left: Iterations vs. residual error for SKM with various sample sizes on $50000 \times 100$ Gaussian system.  Right: Time vs. residual error.}
\label{fig:errortime}\
\end{figure}

For the experiments in Figures \ref{fig:errortime}, \ref{fig:errortime2}, and \ref{fig:betaexperiments}, we fixed the projection parameter at $\lambda = 1.6$ (for reasons discussed below).  On the left of Figure \ref{fig:errortime2}, we see the residual error decreases more quickly per iteration as the sample size, $\beta$ increases.  However, on the right, when measuring the computational time, SKM with $\beta \approx 5000$ performs best.

In Figure \ref{fig:betaexperiments}, we ran experiments varying the halting error and see that the sample size selection, $\beta$, depends additionally on the desired final distance to the feasible region, $P$.  On the right, we attempted to pinpoint the optimal choice of $\beta$ by reducing the sample sizes we were considering.

Like \cite{strohmervershynin}, we observe that `overshooting' ($\lambda > 1$) outperforms other projection parameters, $\lambda \le 1$. In Figure \ref{fig:lambdaexperiments}, we see that the optimal projection parameter, $\lambda$ is system dependent.  For the experiments in Figure \ref{fig:lambdaexperiments}, we ran SKM on the same system until the iterates had residual error less than $2^{-14}$ and averaged the computational time taken over ten runs.  The best choice of $\lambda$ differed greatly between the Gaussian random systems and the correlated random systems; for Gaussian systems it was $ 1.4 < \lambda < 1.6$ while for correlated systems it was $\lambda = 2$.

Our bound on the distance remaining to the feasible region decreases as the number of satisfied constraints increases.  In Figure \ref{fig:satexperiments}, we see that the fraction of satisfied constraints initially increased most quickly for SKM with sample size, $1 < \beta < m$ and projection parameter, $\lambda > 1$.  On the left, we show that SKM with $\beta = m$ is faster in terms of number of iterations.  However, on the right, we show that SKM with $1 < \beta < m$ outperforms $\beta = m$ in terms of time because of its computational cost in each iteration.

\begin{figure}
\begin{center}
\includegraphics[scale=.32]{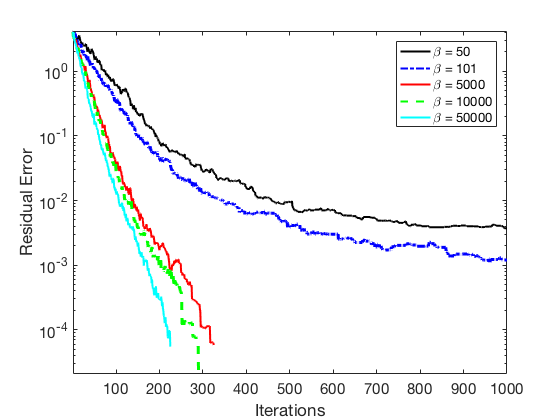}
\includegraphics[scale=.32]{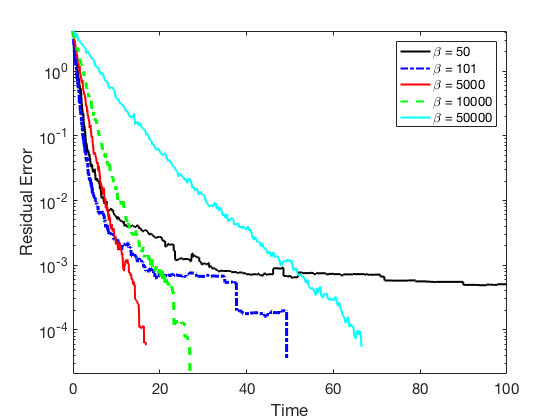}
\end{center}
\caption{Left: Iterations vs. residual error for SKM with sample sizes from $50$ to $m$ on $50000 \times 100$ Gaussian system.  Right: Time vs. residual error.}
\label{fig:errortime2}
\end{figure}

\begin{figure}
\begin{center}
\includegraphics[scale=.32]{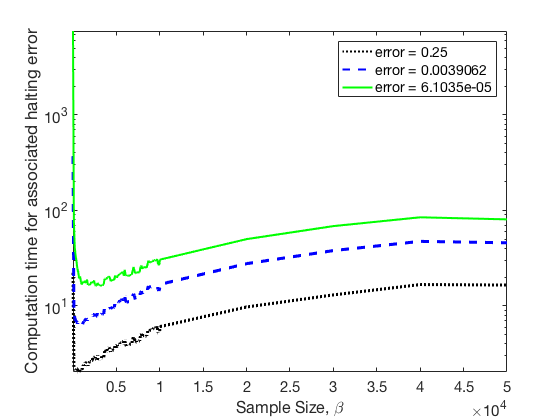}
\includegraphics[scale=.32]{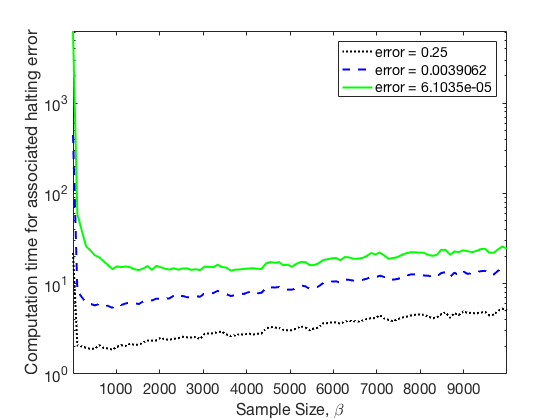}
\end{center}
\caption{Left: Average comp. time for SKM on $50000 \times 100$ Gaussian system to reach various residual errors for $\beta$ between 1 and $m$.  Right: Average comp. time for $\beta$ between 1 and $m/5$.}
\label{fig:betaexperiments}
\end{figure}

\begin{figure}
\begin{center}
\includegraphics[scale=.32]{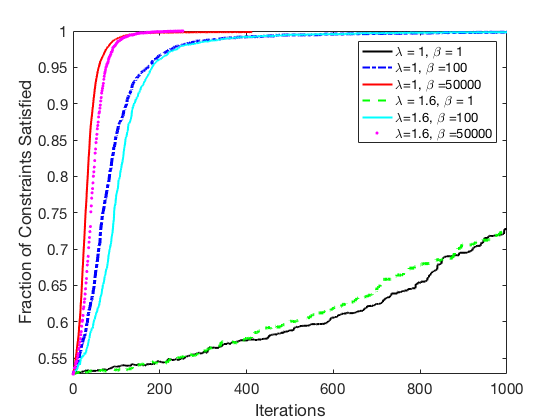}
\includegraphics[scale=.32]{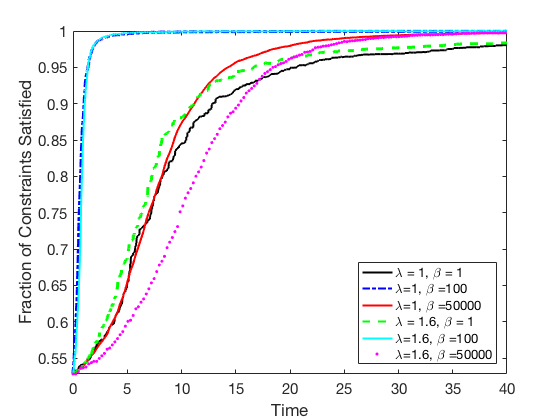}
\end{center}
\caption{Left: Iterations vs. fraction of contraints satisfied for SKM methods on $50000 \times 100$ Gaussian system.  Right: Time vs. fraction of contraints satisfied.}
\label{fig:satexperiments}
\end{figure}

\subsection{Experiments on non-random data}\label{sec:realexp}

We consider next some non-random, non-fabricated test problems: support vector machine (SVM) linear classification instances and feasibility problems equivalent to linear programs arising in well-known benchmark libraries.

We first consider instances that fit the classical SVM problem (see \cite{CAELbook}). 
We used the SKM methods to solve the SVM problem (find a linear classifier) for several data sets from the UCI Machine Learning Repository \cite{UCI}.  The first data set is the well-known Wisconsin (Diagnostic) Breast Cancer data set, which includes data points (vectors) whose features (components) are computed from a digitized image of a fine needle aspirate (FNA) of a breast mass. They describe characteristics of the cell nuclei present in the image.  Each data point is classified as malignant or benign.  The resulting solution to the homogenous system of inequalities, $Ax \le 0$ would ideally define a hyperplane which separates given malignant and benign data points.  However, this data set is not separable.  The system of inequalities has $m=569$ constraints (569 data points) and $n = 30$  variables (29 data features).  Here, SKM is minimizing the residual norm, $||Ax_k||_2$ and is run until $||Ax_k||_2 \le 0.5$.  See Figure \ref{fig:SVMData} for results of SKM runtime on this data set.
\begin{figure}[h]
\begin{center}
\includegraphics[scale=0.32]{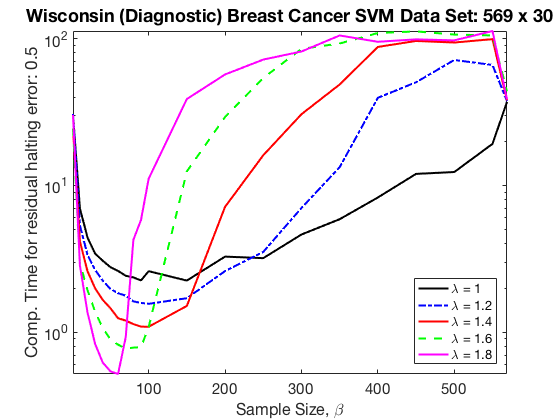}
\includegraphics[scale=0.32]{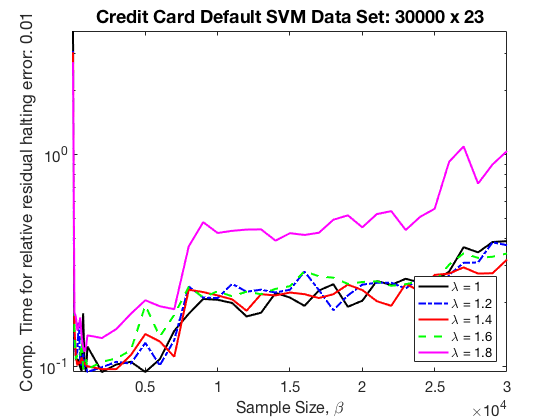}
\end{center}
\caption{Left: Breast Cancer Data SVM.  Right: Credit Card Data SVM.}
\label{fig:SVMData}
\end{figure}

The second data set is a credit card data set, whose data points include features describing the payment profile of a credit card user and the binary classification is for on-time payment or default payment in a billing cycle \cite{creditcard}.  The resulting solution to the homogenous system of inequalities would ideally define a hyperplane which separates given on-time and default data points.  However, this data set is not separable.  The system of inequalities has $m = 30000$ (30000 credit card user profiles) and $n = 23$ (22 profile features).  Here, SKM is run until $||Ax_k||_2 / ||Ax_0||_2 \le 0.01$.  See Figure \ref{fig:SVMData} for results of SKM runtime on this data set.

In the experiments, we again see that for each problem there is an optimal choice for the sample size, $\beta$, in terms of smallest computation time.  
We measure the average computation time necessary for SKM with several choices of sample size $\beta$ to reach the halting (positive) residual error.  Regardless of choice of projection parameter, $\lambda$, we see again that best performance occurs for $\beta$ between $1$ and $m$. Note that the curves are not as smooth as before, which we attribute to the wider irregularity of coefficients, which in turn forces the residual error more to be more dependent on the actual constraints.

We next implemented SKM on several \emph{Netlib} linear programming (LP) problems \cite{Netlib}.  Each of these problems was originally formulated as the LP $\min c^Tx \text{ subject to } Ax = b, \; l \le x \le u$ with optimum value $p^*$.  We reformulated these problems as the equivalent linear feasibility problem $\tilde{A} x \le \tilde{b}$ where $$\tilde{A} = \begin{bmatrix}A\\ -A\\ I\\ -I\\ c^T\end{bmatrix} \text{ and } \tilde{b} = \begin{bmatrix} b\\ -b\\ u\\ -l\\ p^*\end{bmatrix}.$$ See Figures \ref{fig:NetlibData1}, \ref{fig:NetlibData2}, \ref{fig:NetlibData3}, \ref{fig:NetlibData4}, and \ref{fig:NetlibData5} for results of SKM runtime on these problems as we vary $\beta$ and $\lambda$. Once more, regardless of choice of projection parameter, $\lambda$, we see optimal performance occurs for $\beta$ between $1$ and $m$.

It would be possible to handle these equalities without employing our splitting technique to generate inequalities.  This splitting technique only increases $m$ ($||A||_F^2$) and does not affect the Hoffman constant, which is $||\tilde{A}^{-1}||_2$ in this case.  It may be useful to explore such an extension.
\begin{figure}[H]
\includegraphics[scale=0.32]{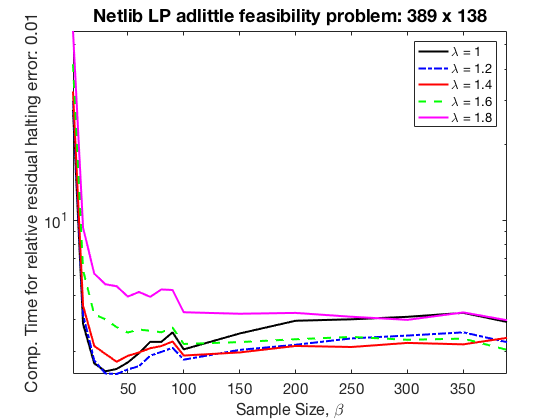} \includegraphics[scale=0.32]{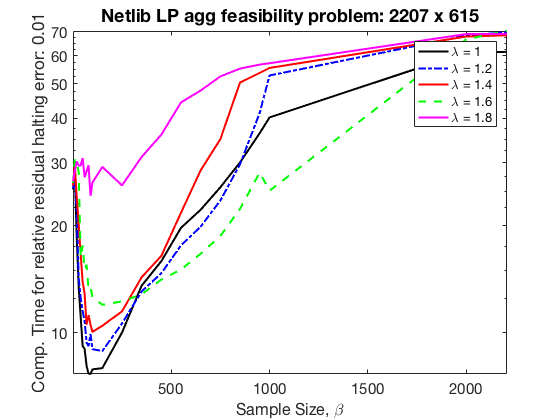}
\caption{Left: SKM behavior for \emph{Netlib} LP adlittle.  Right: SKM behavior for \emph{Netlib} LP agg}
\label{fig:NetlibData1}
\end{figure}
\begin{figure}[H]
 \includegraphics[scale=0.32]{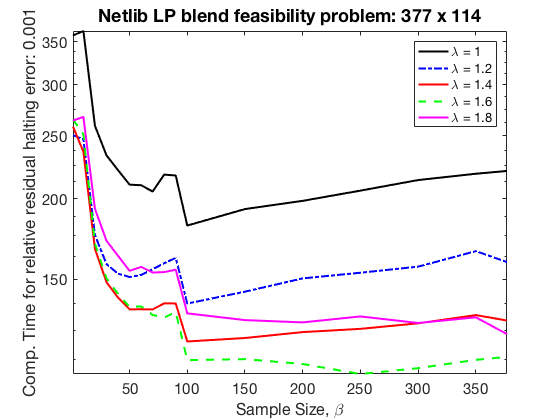} \includegraphics[scale=0.32]{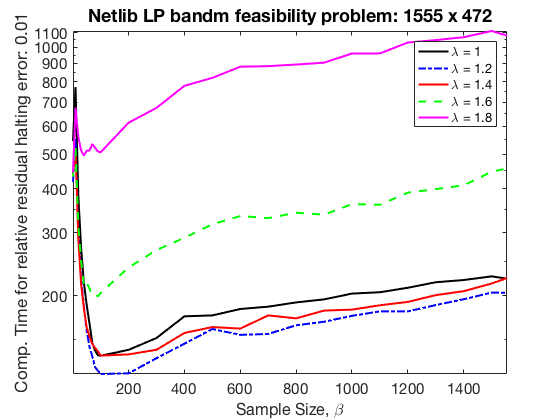}
\caption{Left: SKM behavior for \emph{Netlib} LP blend.  Right: SKM behavior for \emph{Netlib} LP bandm.}
\label{fig:NetlibData2}
\end{figure}
\begin{figure}[H]
\includegraphics[scale=0.32]{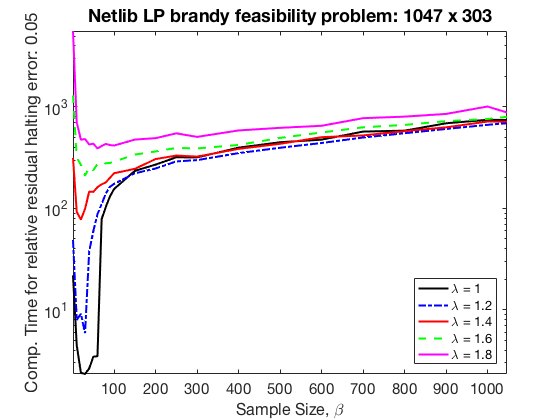} \includegraphics[scale=0.32]{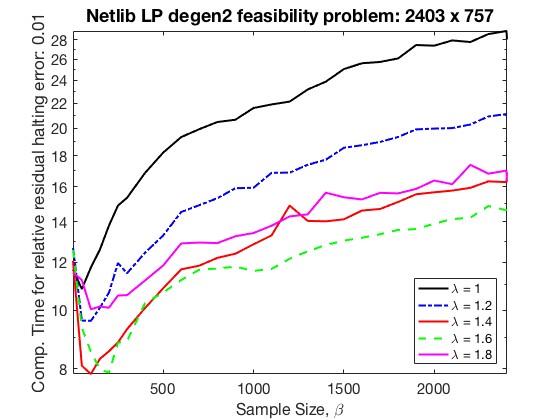}
\caption{Left: SKM behavior for \emph{Netlib} LP brandy.  Right: SKM behavior for \emph{Netlib} LP degen2.}
\label{fig:NetlibData3}
\end{figure}
\begin{figure}[H]
\includegraphics[scale=0.32]{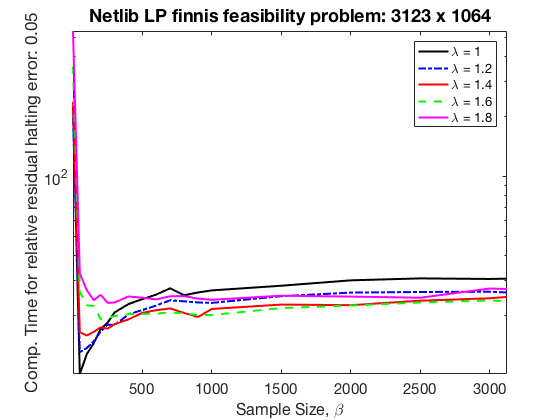} \includegraphics[scale=0.32]{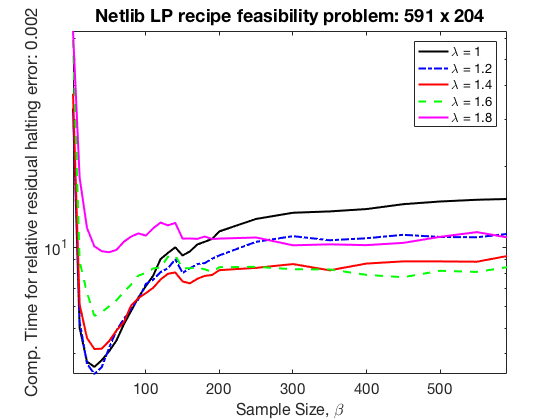}
\caption{Left: SKM behavior for \emph{Netlib} LP finnis.  Right: SKM behavior for \emph{Netlib} LP recipe.}
\label{fig:NetlibData4}
\end{figure}
\begin{figure}[H]
\includegraphics[scale=0.32]{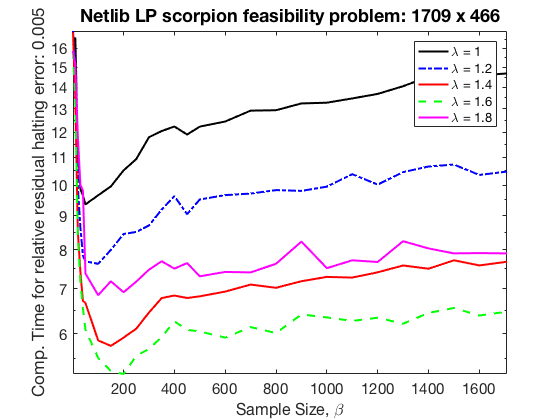}\includegraphics[scale=0.32]{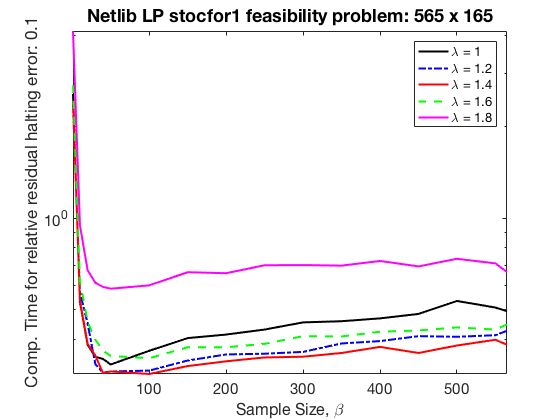}
\caption{Left: SKM behavior for \emph{Netlib} LP scorpion. Right: SKM behavior for \emph{Netlib} LP stocfor1.}
\label{fig:NetlibData5}
\end{figure}

\subsection{Comparison to existing methods} \label{subsec:morecomparison}

\vskip .3cm

In Table \ref{table:lp}, we investigate the performance behavior of SKM versus interior-point and active-set methods on several \emph{Netlib} LPs.  
For fairness of comparison, we gauge our code written in MATLAB versus the MATLAB Optimization Toolbox function \emph{fmincon}. The function \emph{fmincon} allows a user to select either an `interior-point' algorithm or an `active-set' algorithm.

We first used fmincon to solve the feasibility problem as described in Section \ref{sec:realexp} by applying this function to $\min 0 \text{ such that } \tilde{A} x \le \tilde{b}$.  However, the interior-point method and active-set method were mostly unable to solve these feasibility form problems.  The interior-point algorithm was never able to solve feasibility, due to the fact that the system of equations defined by the KKT conditions in each iteration was numerically singular.  Similarly, in most cases, the active-set method was halted in the initial step of finding a feasible point.  For fairness of comparison, we do not list these results.  

In Table \ref{table:lp}, we list CPU timings for the MATLAB interior-point and active-set fmincon algorithms to solve the original optimization LPs ($\min c^Tx \text{ such that } Ax = b, l \le x \le u$), and SKM to solve the equivalent feasibility problem, $\tilde{A}x \le \tilde{b}$, as described in Section \ref{sec:realexp}. Note that this is not an obvious comparion as SKM is designed for feasibility problems, and in principle, the stopping criterion may force SKM to stop near a feasible point, but not necessarily near an optimum. On the other hand, interior point methods and active set methods decrease the value of the objective and simultaneously solve feasibility. 
The halting criterion for SKM remains that  $\frac{\max(\tilde{A}x_k - \tilde{b})}{\max(\tilde{A}x_0 - \tilde{b})} \le \epsilon_{\text{err}}$ where $\epsilon_{\text{err}}$ is the halting error bound listed for each problem in the table.  The halting criterion for the fmincon algorithms is that $\frac{\max(Ax_k - b, l - x_k, x_k - u)}{\max(Ax_0 - b, l-x_0, x_0 - u)} \le \epsilon_{\text{err}}$ and $\frac{c^Tx_k}{c^Tx_0} \le \epsilon_{\text{err}}$ where $\epsilon_{\text{err}}$ is the halting error bound listed for each problem in the table. Each of the methods were started with the same initial point far from the feasible region.
The experiments show our SKM method compares favorably with the other codes.

\begin{table}[tbhp]
\caption{CPU time comparisons for MATLAB methods solving LP and SKM solving feasibility.} 
\label{table:lp}
\begin{tiny}$*$ indicates that the solver did not solve the problem to the desired accuracy due to reaching an upper limit on function evaluations of 100000\end{tiny}
\centering
\begin{small}
\begin{tabular}{ |c|c||c|c|c||c|c|c| } 
 \hline
Problem &Dimensions& Int-Point & SKM & Active-Set & $\epsilon_{\text{err}}$ & SKM $\lambda$ & SKM $\beta$ \\\hline
LP adlittle & $389 \times 138$& 2.08 & 0.29 & 1.85 & $10^{-2}$ & 1.2 & 30 \\\hline
LP agg& $2207 \times 615$ & 109.54* & 20.55 & 554.52* & $10^{-2}$ & 1 & 100 \\\hline
LP bandm& $1555 \times 472$ & 27.21 & 756.71 & 518.44* & $10^{-2}$ & 1.2 & 100 \\\hline
LP blend& $337 \times 114$ & 1.87  & 367.33 & 2.20 & $10^{-3}$ & 1.6 & 250 \\\hline
LP brandy& $1047 \times 303$ & 21.26 & 240.83 & 90.46 & 0.05 & 1 & 20 \\\hline
LP degen2& $2403 \times 757$ & 6.70 & 22.41 & 25725.23 & $10^{-2}$ & 1.4 & 100 \\\hline
LP finnis& $3123 \times 1064$ & 115.47* & 13.76 & 431380.82*  & 0.05 & 1 & 50 \\\hline
LP recipe& $591 \times 204$ &  2.81 & 2.62 & 5.56 & 0.002 & 1.2 & 30 \\\hline
LP scorpion& $1709 \times 466$ & 11.80 & 22.22 & 10.38 & 0.005 & 1.6 & 200 \\\hline
LP stocfor1& $565 \times 165$ & 0.53 & 0.34 & 3.29 & 0.1 & 1.4 & 50\\
 \hline
\end{tabular}
\end{small}
\end{table}

For the experiments in Table \ref{table:lp}, the interior-point method was not able to solve for LP agg and LP finnis before hitting the upper bound on function evaluations due to slow progression towards feasibility.  The active-set method was not able to solve for LP agg, LP bandm and LP finnis before hitting the upper bound on function evaluations due to a very slow (or incomplete) initial step in finding a feasible point.  As mentioned before, the methods were initialized with a point far from the feasible region which may have contributed to the interior-point and active-set methods poor performances.

In Figures \ref{fig:block1} and \ref{fig:block2}, we compare the SKM method to the block Kaczmarz (BK) method (with randomly selected blocks).  Here we solve only systems of linear equations, not inequalities, and we consider only random data as our implemented block Kaczmarz method selects blocks at random.  We see that the performance of the block Kaczmarz method is closely linked to the conditioning of the selected blocks, as the BK method must solve a system of equations in each iteration, rather than one equation as for SKM.  

For the Gaussian random data, the selected blocks are well-conditioned and with high probability, the block division has formed a row-paving of the matrix.  Here we see that BK outperforms SKM.  However, when we consider correlated data instead, the behavior of BK reflects the poor conditioning of the blocks.  In the three included figures, we test with correlated matrices with increasingly poorly conditioned blocks.  If the blocks are numerically ill-conditioned, SKM is able to outperform BK. For systems of equations in which blocks are well conditioned and easy to identify, BK has advantages over SKM.  However, if you are unable or unwilling to find a good paving, SKM can be used and is able to outperform BK.  When BK is used with inequalities, a paving with more strict geometric properties must be found, and this can be computationally challenging, see \cite{needellbriskman} for details. SKM avoids this issue.
\begin{figure}[h]
\includegraphics[scale=0.32]{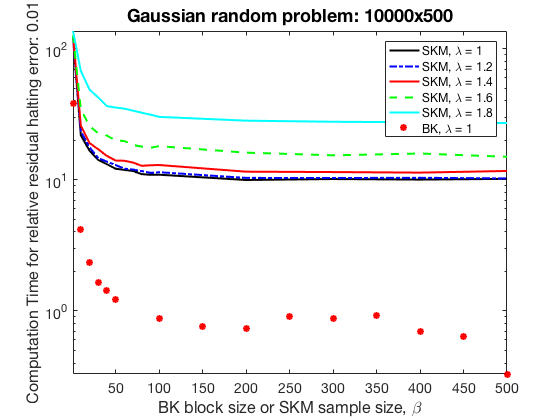}\includegraphics[scale=0.32]{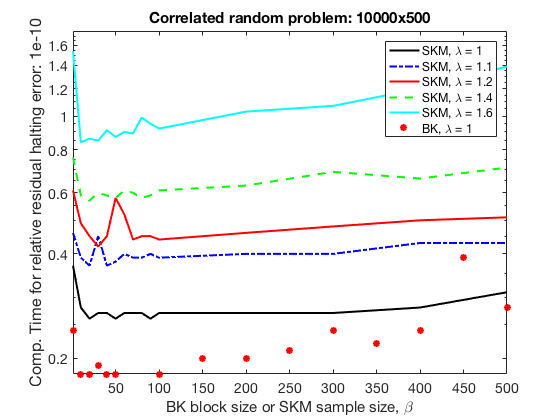}
\caption{Comparison of SKM method runtimes with various choices of sample size, $\beta$ and block Kaczmarz method runtimes with various choices of block size on different types of random systems. Left: Gaussian random system.  Right: Correlated random system with entries chosen uniformly from $[0.9,0.9 + 10^{-5}]$.  }
\label{fig:block1}
\end{figure}

\begin{figure}[H]
\includegraphics[scale=0.32]{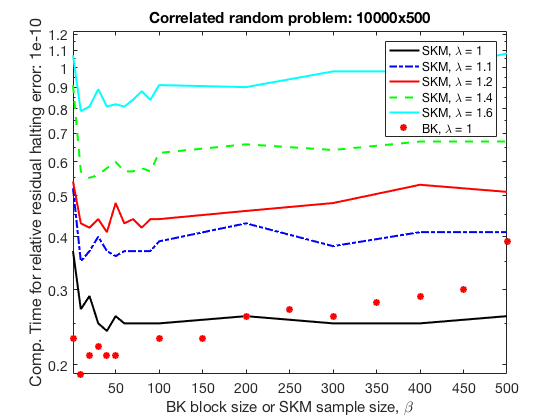}\includegraphics[scale=0.32]{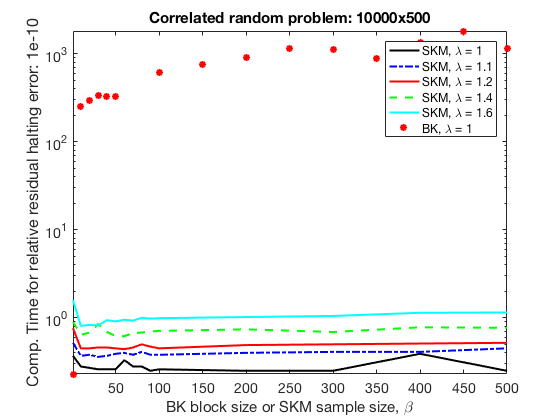}
\caption{Left: Correlated random system with entries chosen uniformly from $[0.9,0.9+10^{-16}]$.  Right: Correlated random system with entries chosen uniformly from $[0.9, 0.9 + 10^{-20}]$.}
\label{fig:block2}
\end{figure}

\section{Remarks about optimal selection of parameters}

\subsection{Choice of $\beta$}  As observed by Theorem \ref{metatheorem1}, the sample size $\beta$ used in each iteration of SKM plays a role in the convergence rate of the method.  
By the definition of $V_{k-1}$ in Theorem \ref{metatheorem1}  and by the bound in Proposition \ref{improvedrate} the choice $\beta = m$ yields the fastest convergence rate.  Indeed, this 
coincides with the classical method of Motzkin; one selects the most violated constraint out of \textit{all} the constraints in each iteration.  However, it is also clear that this choice of $\beta$ 
is extremely costly in terms of computation, and so the more relevant question is about the choice of $\beta$ that optimizes the convergence rate in terms of total computation. 

To gain an understanding of the tradeoff between convergence rate and computation time in terms of the parameter $\beta$, we consider a fixed iteration $j$ and for simplicity choose $\lambda = 1$.  Denote the residual by $r := (Ax_j - b)^+$, and suppose $s$ inequalities are satisfied in this iteration; that is, $r$ has $s$ zero entries.  Write $r_{\tau_j}$ for the portion of the residual selected in Step 3 of SKM (so $|\tau_j|=\beta$).  Then as seen from Equation \eqref{resid} in the proof of Theorem \ref{metatheorem1}, the expected improvement (i.e. $d(x_{j},P) - d(x_{j+1},P)$) made in this iteration is given by $\mathbb{E}\|r_{\tau_j}\|_{\infty}^2$.  Expressing this quantity as in \eqref{eq1} along with Lemma \ref{summationaverage}, one sees that the worst case improvement will be made when the $m-s$ non-zero components of the residual vector are all the same magnitude (i.e. $\mathbb{E}\|r_{\tau_j}\|_{\infty} \geq \frac{1}{m-s}\|r\|_1$).  We thus focus on this scenario in tuning $\beta$ to obtain a minimax heuristic for the optimal selection.  We model the computation count in a fixed iteration as some constant computation time for overhead $C$ plus a factor that scales like $n\beta$, since checking the feasibility of $\beta$ constraints takes time $O(n\beta)$.  We therefore seek a value for $\beta$ that maximizes the ratio of improvement made and computation cost:
\begin{equation}\label{ratio}
\text{gain}(\beta) := \frac{\mathbb{E}\|r_{\tau_j}\|_{\infty}^2}{C + cn\beta},
\end{equation}
when the residual $r$ consists of $m-s$ non-zeros of the same magnitude.   Call the support of the residual $T := \text{supp}(r) = \{i : r_i \ne 0\}$.  Without loss of generality, we may assume that the magnitude of these entries is just $1$.  In that case, one easily computes that 
$$
\mathbb{E}\|r_{\tau_j}\|_{\infty}^2 = \mathbb{P}(T\cap\tau_j \ne \emptyset) = 
\begin{cases}
1 - \frac{\dbinom{s}{\beta}}{\dbinom{m}{\beta}} \approx 1 - \left(\frac{s}{m}\right)^\beta & \text{if $\beta \leq s$},\\
1 & \text{if $\beta > s$},\\
\end{cases}
$$
where we have used Stirling's approximation in the first case.

We may now plot the quantity 
\begin{equation}\label{gain}
\text{gain}(\beta) \approx 
 \frac{1 - \left(\frac{s}{m}\right)^\beta}{C + cn\beta} 
\end{equation} 
as a function of $\beta$, for various choices of $s$.  Figure \ref{fig:betaplot} shows an example of this function for some specific parameter settings.  We see that, as in the experiments of Section \ref{sec:exp}, optimal $\beta$ selection need not necessarily be at either of the endpoints $\beta=1$ or $\beta=m$ (corresponding to classical randomized Kaczmarz and Motzkin's method, respectively).  In particular, one observes that as the number of satisfied constraints $s$ increases, the optimal size of $\beta$ also increases.  This of course is not surprising, since with many satisfied constraints if we use a small value of $\beta$ we are likely to see mostly satisfied constraints in our selection and thus make little to no progress in that iteration.  Again, this plot is for the worst case scenario when the residual has constant non-zero entries, but serves as a heuristic for how one might tune the choice of $\beta$.  In particular, it might be worthwhile to increase $\beta$ throughout the iterations.

\begin{figure}
\begin{center}
\includegraphics[scale=.5]{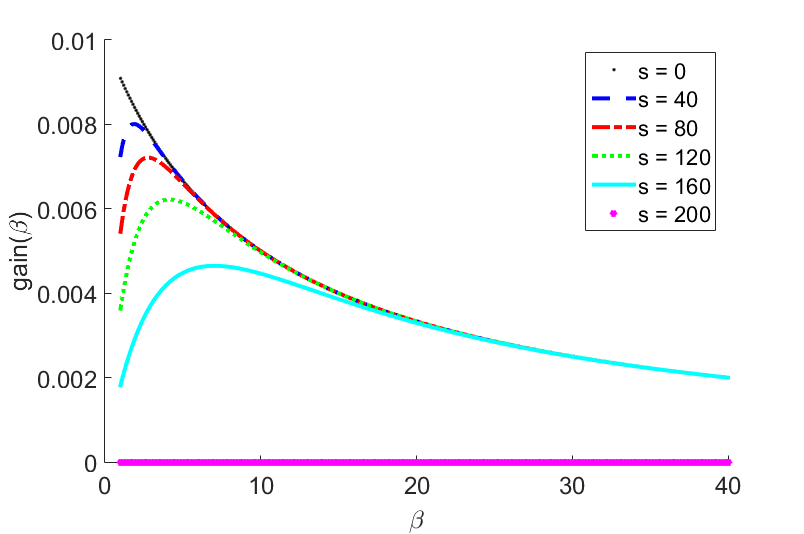}
\end{center}
\caption{The quantity gain($\beta$) as in \eqref{gain} as a function of $\beta$ for various numbers of 
satisfied constraints $s$.  Here we set $m=200$, $n=10$, $c=1$ and $C=100$.  Optimal values of $\beta$ maximize the gain function.}
\label{fig:betaplot}
\end{figure}

\subsection{Choice of $\lambda$}
Additionally, the optimal choice of projection parameter $\lambda$ is system dependent (e.g., for certain systems, one should choose $\lambda = 1$ while for certain full-dimensional systems, one should choose $\lambda > 1$).  
Theoretically, the convergence rate we provided in Theorem \ref{metatheorem1}  depends upon $\lambda$ in a weak way; one would always choose $\lambda = 1$.  However, we see experimentally that overshooting outperforms other choices of $\lambda$.  Additionally, one can easily imagine that for systems whose polyhedral feasible region is full-dimensional, choosing $\lambda > 1$ will outperform $\lambda \le 1$, as eventually, the iterates could `hop' into the the feasible region.  The proof of Proposition \ref{improvedrate} suggests a possible reason why we see this in our experiments.  This proposition is a consequence of the fact that if the method does not terminate then it will converge to a unique face of $P$.  If $\lambda > 1$, then this face cannot be a facet of $P$, as if the method converged to such a face, it would eventually terminate, `hopping' over the facet into $P$.  Thus, for $\lambda > 1$, the number of possible faces of $P$ that the sequence of iterates can converge to is decreased.  Further work is needed before defining the optimal choice of $\lambda$ or $\beta$ for any class of systems.

\subsection{Concluding remarks} 
We have shown SKM is a natural generalization of the methods of Kaczmarz and Motzkin with a theoretical analysis that combines earlier arguments. 
Moreover, compared to these two older methods, the SKM approach leads to significant acceleration with the right choices of parameters.
We wish to note that,  by easy polarization-homogenization of the information (where the hyperplane normals $a_i$ are thought of as points and the solution vector $x$ is a separating plane), one can reinterpret SKM as a type of \emph{stochastic gradient descent} (SGD). Indeed, in SGD one allows the direction to be a random vector whose expected value is the gradient direction; here we generate a random direction that stems from a sampling of the possible increments. More on this will be discussed in a forthcoming article.
 In future work we intend to identify the optimal choices for $\beta$ and $\lambda$ for classes of systems and to connect SKM to Chubanov's style generation of additional linear inequalities that have been successfully used to speed computation \cite{chubanov,deloerabasujunod,veghzambelli}.  All code discussed in this paper is freely available at \url{https://www.math.ucdavis.edu/~jhaddock}.

\section{Acknowledgements} The authors are truly grateful to the anonymous referees and the editor for their many comments and suggestions which have greatly improved this paper.  

\bibliographystyle{siamplain}
\bibliography{bib,bibliolp}

\end{document}